\numberwithin{equation}{section}
\author{Giuseppe Scola\\
	Gran Sasso Science Institute (GSSI), \\ Viale Francesco Crispi, 7, 67100, L'Aquila (IT).
	\\
	giuseppe.scola@gssi.it}
\newtheorem{theorem}{Theorem}[section]
\newtheorem{corollary}{Corollary}[theorem]
\newtheorem{lemma}[theorem]{Lemma}
\newtheorem{remark}{Remark}[section]
\title{Local moderate and precise large deviations via cluster expansions}
\newcommand{\probBC}{\mathbb{P}^{\mathbf{\zero}}_{\Lambda,\mu_0}}
\newcommand{\e}{\boldsymbol{\eta}}
\newcommand{\canBC}{Z^{\mathbf{0}}_{\Lambda,\beta}}
\newcommand{\can}{Z_{\Lambda,\beta}} 
\newcommand{\GcanBC}{\Xi^{\mathbf{0}}_{\Lambda,\beta}} 
\newcommand{\Gcan}{\Xi_{\Lambda,\beta}} 
\newcommand{\PNL}{P_{N,|\Lambda|}} 
\newcommand{\Bbl}{B_{\Lambda,\beta}} 
\newcommand{\R}{\mathbb{R}} 
\newcommand{\tN}{\tilde{N}} 
\newcommand{\bN}{\bar{N}_{\Lambda}} 
\newcommand{\bRL}{\bar{\rho}_{\Lambda}} 
\newcommand{\tRL}{\tilde{\rho}_{\Lambda}} 
\newcommand{\F}{\mathcal{F}_{\Lambda,\beta,\mathbf{0}}}
\newcommand{\fgc}{f^{GC}_{\Lambda,\beta,\mathbf{0}}}
\newcommand{\f}{\mathcal{F}}
\newcommand{\p}{\mathcal{P}}
\newcommand{\s}{\lesssim}
\newcommand{\zero}{\mathbf{0}}
\def\@makefnmark{}
\newcommand{\myfootnote}[2]{\footnote{{#1} #2}}
\date{}
\begin{document}
	
	\maketitle
	\begin{abstract}
		We consider a system of classical particles confined in a box $\Lambda\subset\mathbb{R}^d$ with 
		zero boundary conditions  interacting via a stable and regular pair potential. 
		Based on the validity of the cluster expansion for the canonical partition function in the high temperature - low density regime we prove
		moderate and precise large deviations from the mean value of the number of particles with
		respect to the grand-canonical Gibbs measure. In this way we have a direct method of computing both the exponential rate as well as the pre-factor and obtain explicit error terms. 
		Estimates comparing with the infinite volume versions of the above are also provided.
	\end{abstract}
	
	\tableofcontents
	
	\myfootnote{{\it{2010 Mathematics Subject Classification.}} 60F05, 60F10, 82B05.
		
		{\it{\quad Key words and phrases.}} Precise large deviations, local moderate deviations, 
		
		\quad cluster expansion.}

\section{Introduction}
Limit theorems in probability have been very useful in expressing thermodynamic quantities in statistical mechanics in terms of variational principles \cite{lanford1973entropy}. On the other hand, other more explicit methods have been developed in the mathematical physics literature for the calculation of the thermodynamic potentials such as cluster expansions which represent them as absolutely convergent series \cite{mayerstatistical},\cite{poghosyan2009abstract}. Comparing the two, one could think that the latter might be a way for an explicit calculation of the expressions appearing in the probabilistic limit theorems. In fact, the aim of this paper is to give a precise example in a general framework where cluster expansions could give explicit forms for the functionals appearing in the limit theorems. However, the price to pay is that we have to restrict ourselves to the rather small range of validity of the cluster expansions, while variational principles cover even phase transitions. We hope however that this could be enlightening for the better understanding of these methods, but also useful when higher order corrections and/or pre-factors to the asymptotic formula of large deviations is needed. Furthermore, given the validity of
the cluster expansion as well as of the virial inversion for inhomogeneous density functionals \cite{jansen2019virial} the connection
outlined above	can be extended in a broader set of models.



More precisely, we work in the context of the classical gas in a box with zero boundary conditions. The particles interact via a stable and regular potential. Large deviations for such systems have been developed by Georgii \cite{georgii1994large} in terms of point processes and the question of equivalence of ensembles has been addressed. More recently, the fluctuations have also been studied \cite{cancrini2017ensemble} together with the equivalence of the canonical and microcanonical ensemble. 
In a similar spirit but for the Ising model, in \cite{del1974local} the author performs a central limit theorem expansion using the characteristic function and Gnedenko's method.
An instructive review on the topic of moderate and precise large deviations for the Ising model is given by Dobrushin and Shlosman in \cite{dobrushin1994large} with a rich bibliographical account. 
In this work, the authors start from a probabilistic large deviation approach
and obtain precise large deviations as well as moderate deviations using the characteristic function.
Furthermore, they focus on the more interesting phase transition regime.

Our work is in the spirit of the aforementioned references and adds some more information in the following directions:
\begin{enumerate}
	\item Given the validity of the cluster expansion in the canonical ensemble (in the high temperature - low density regime) we present a direct calculation of the exponential rate as well as of the pre-factor (via absolutely convergent power series) appearing in the limit theorems avoiding the
	computations based on the characteristic function as in \cite{cancrini2017ensemble}, \cite{del1974local}, and \cite{dobrushin1994large}. Furthermore, comparisons between the finite and the infinite volume functionals are provided based
	on related results - \cite{pulvirenti2015finite} - whenever cluster expansion holds.
	\item We consider zero boundary conditions and see how they influence the choice of the finite volume functionals, as opposed to the simpler case of periodic boundary conditions.  	
	\item We deal with canonical/grand-canonical equivalence rather than the microcanonical as in \cite{cancrini2017ensemble}. 
\end{enumerate}


The paper is organized as follows: We start (Section~\ref{S1}) fixing the notation and recalling the cluster expansion for the canonical partition function as it is presented in \cite{pulvirenti2012cluster}. Then, we present the main results (Section \ref{SubThs}). In Section \ref{Sub1} we give a brief comparison between our approach and the one presented in \cite{del1974local} and \cite{dobrushin1994large}.  In Section \ref{ProofSec} we give the backbone of the proofs, while all technical details have been incorporated in a series of lemmas which are given and proved in Section~\ref{S2}.


\section{Formal description of the model: notation, main definitions and useful results} 
\label{S1}

We consider a system of $N$ indistinguishable particles at inverse temperature $\beta$, described by a configuration 
$\mathbf{q}=\{q_1,...,q_N\}$ (where $q_i$ is the position of the $i^{th}$ particle), confined  in a box $\Lambda:=\big(\frac{l}{2},\frac{l}{2}\big]^d\subset\R^d$ (for some $l>0$).  The particles interact with a pair potential $V:\R^d\rightarrow\R\cup\{\infty\}$ which is an even function and which satisfies the following assumptions:
	
	\textbf{Assumption 1:} there exists a constant $B\ge0$ such that for all $n\in\mathbb N$ and for every configuration $(x_1,..,x_n)\in(\R^d)^n$

	\begin{equation}
	\sum_{1\le i<j\le n}V(x_i-x_j)\ge-Bn\;\;\;\mathrm{(stability)};
	\label{assumption1}
	\end{equation}
	
	\textbf{Assumption 2:} 
	\begin{equation}
	C(\beta):=\int_{\R^d}|e^{-\beta V(x)}-1|dx<\infty\;\;\;\mathrm{(regularity)},
	\label{assumption2}
	\end{equation}
	for all $\beta>0$. 
	
	We assume that the particles in $\Lambda$ do not interact with the particles in $\Lambda^c:=\mathbb{R}^d\setminus\Lambda$ (zero boundary conditions), hence, the Hamiltonian is given by
	
	\begin{equation}
	H^{\zero}_{\Lambda}(\mathbf{q}):=\sum_{1\le i<j\le N}V(q_i-q_j)
	\label{Hamiltonian0}
	\end{equation}
	where $\mathbf{q}\in\Lambda^N$.

The \textit{canonical} and the \textit{grand-canonical partition function} are given by

\begin{equation}	
\canBC(N):=\frac{1}{N!}\int_{\Lambda^N}dq_1...dq_N\;e^{-\beta H^{\zero}_{\Lambda}(\mathbf{q})}
\label{Can0}
\end{equation}
and
\begin{equation}
\GcanBC(\mu):=\sum_{N\ge 0}e^{\beta\mu N} \canBC(N),
\label{GrandCan0}
\end{equation}
where $\mu\in\R$ is the chemical potential. When  we do not need to specify the conditions at the boundary we write $\can(N)$ and $\Gcan(\mu)$. 

We define the \textit{finite\;volume\;free\;energy} and the \textit{finite\;volume\;pressure} as:

\begin{equation}
f_{\Lambda,\beta,\zero}(N):=-\frac{1}{\beta|\Lambda|}\log Z^{\zero}_{\Lambda,\beta}(N),
\label{FreeE}
\end{equation}
and
\begin{equation}
p_{\Lambda,\beta,\zero}(\mu):=\frac{1}{\beta|\Lambda|}\log\Xi^{\zero}_{\Lambda,\beta}(\mu).
\label{FVP}
\end{equation}

As before when we do not need to specify the dependence of the previous quantities on the boundary conditions we will use the notation $f_{\Lambda,\beta}(\cdot)$ and 
$p_{\Lambda,\beta}(\cdot)$.  For later use we also introduce the {\it{grand-canonical free energy}}:
	\begin{equation}
	\beta f_{\Lambda,\beta,\zero}^{GC}(\rho):=\sup_{\mu\in\mathbb{R}}\left\{\beta\mu\rho-\beta p_{\Lambda,\beta,\zero}(\mu) \right\}
	\label{GCFE1}
	\end{equation}
	and the {\it{canonical pressure}}:
	\begin{equation}
	\beta p^C_{\Lambda,\beta,\zero}(\mu):=\sup_{N\in\mathbb{N}}\left\{\beta \frac{N}{|\Lambda|}\mu-\beta f_{\Lambda,\beta,\zero}(N)\right\},
	\label{N*}
	\end{equation}
	for a given $\rho\in(0,1)$ and $\mu\in\mathbb{R}$.

We define the (infinite volume) \textit{free\;energy} and \textit{pressure} as

\begin{equation}  f_{\beta}(\rho):=\lim_{\substack{\Lambda\rightarrow\R^d\\N/|\Lambda|\rightarrow\rho}}f_{\Lambda,\beta}(N)
\label{Infinit}
\end{equation}
and
\begin{equation}
p_{\beta}(\mu):=\lim_{\Lambda\rightarrow\R^d}p_{\Lambda,\beta}(\mu);
\end{equation}
they are related via the Legendre transform:

\begin{equation}
\beta f_{\beta}(\rho)=\sup_{\mu}\left\{\beta \rho\mu- \beta p_{\beta}(\mu)\right\}
\label{FreeELT}
\end{equation}
and
\begin{equation}
\beta p_{\beta}(\mu)=\sup_{\rho}\left\{\beta \rho\mu-\beta f_{\beta}(\rho)\right\}.
\label{PressLT}
\end{equation}
Let us note that given some $\mu_0$, evaluating at the supremum we obtain:
\begin{equation}
\mu_0=f'_{\beta}(\rho_0)\Leftrightarrow \rho_0=p_{\beta}'(\mu_0),
\label{MU}
\end{equation}
(if $f_\beta$ is strictly convex) 
which gives 
\begin{equation}
\beta f_{\beta}(\rho_0)=\beta\rho_{0}\mu_0-\beta p_{\beta}(\mu_0).
\end{equation}

Given a chemical potential $\mu_0$, the grand-canonical probability measure with zero boundary conditions $\mathbb{P}^{\zero}_{\Lambda,\mu_0}(\cdot)$ is given by 

\begin{equation}
\mathbb{P}^{\zero}_{\Lambda,\mu_0}(\mathrm{d}\mathbf{q}):=\bigotimes_{N\geq 0}\frac{e^{\beta\mu_0 N}e^{-\beta H_{\Lambda}^{\zero}(\mathbf{q})}dq_1\cdot\cdot\cdot dq_N}{\GcanBC(\mu_0)\;N!},
\label{GcProbE}
\end{equation} 
where by $dq_i$ we denote the Lebesgue measure on $\Lambda\subset\mathbb R^d$.
The mean value of the particle density and the variance calculated using the grand-canonical probability measure are denoted by: 

\begin{equation}
\bRL:=\frac{1}{|\Lambda|}\mathbb{E}^{\zero}_{\Lambda,\mu_0}\left[N\right]=\frac{\partial}{\partial\mu} p_{\Lambda,\beta,\zero}(\mu)\bigg|_{\mu=\mu_0},\;\;\bN:=\lfloor\bRL|\Lambda|\rfloor
\label{MeanValue}
\end{equation}
and
\begin{equation}
\sigma^2_{\Lambda,\zero}(\mu_0):=\mathbb{E}^{\zero}_{\Lambda,\mu_0}\left[\frac{(N-\bRL|\Lambda|)^2}{|\Lambda|}\right]=\frac{1}{\beta}\frac{\partial^2}{\partial\mu^2} p_{\Lambda,\beta,\zero}(\mu)\bigg|_{\mu=\mu_0}.
\label{GeneralVariance}
\end{equation}

We define the deviation of order $\alpha\in [1/2,1]$ from $\bN$ as follows:
\begin{equation}
\tN\equiv\tN(u,\alpha):=\bN+ u|\Lambda|^{\alpha},
\label{GeneralDeviations}
\end{equation} 
with $u\in\R$ (in such a way that $\tN\in\mathbb{N}$). Furthermore, we also denote with $A_{\tN}$, the set of particle configurations of $\tN$ particles inside $\Lambda$:
	\begin{equation}\label{SetA}
	A_{\tN}:=\{\mathbf{q}\equiv\{q_i\}_{i \geq 1}, q_i\in\R^d |\, |\mathbf{q}\cap\Lambda|=\tN\}.
	\end{equation}

	In this paper we study the (grand-canonical) probability  - \eqref{GcProbE} - of the number of particles defined in \eqref{GeneralDeviations}, following a different approach than usually.	
	As we anticipated in the introduction and as it will be explained better in next sections, 
	our method is based on the validity of the cluster expansion for the canonical partition function. This approach is also beneficial whenever uniform estimates in the volume are needed (in order to pass to the limit). On the one hand, this is a more direct and explicit method for the calculation of the deviations, but on the other, it is quite restrictive as it is valid only for small values of the density.
	Therefore, following \cite{pulvirenti2012cluster}, we define for all $n\ge1$ 
	\begin{equation}
	F_{\beta,N,\Lambda}(n):=\frac{1}{n+1}\PNL(n) B_{\Lambda,\beta}(n),
	\label{Cluster-Coefficient}
	\end{equation}
	where, for the definition of $B_{\Lambda,\beta}(n)$ (not needed here) we refer to   \eqref{B} and with
	\begin{equation}
	\PNL(n):=\begin{cases}\frac{(N-1)\cdot\cdot\cdot(N-n)}{|\Lambda|^n}\;\;\;\mathrm{if}\;n<N,\\
	\\
	0\;\;\;\mathrm{otherwise}.
	\end{cases}
	\label{PNL}
	\end{equation}
	Hence, having these quantities, the cluster expansion of the canonical partition function allows us to rewrite the logarithm of \eqref{Can0} as 
	\begin{equation}
	\frac{1}{|\Lambda|}\log \canBC(N)=\frac{1}{|\Lambda|}\log\frac{|\Lambda|^N}{N!}+\frac{N}{|\Lambda|}\sum_{n\ge 1}F_{\beta,N,\Lambda}(n),
	\label{CanCE1}
	\end{equation}
	assuming that:	\newline
	
	\textbf{Condition ($\star$):}
	\begin{equation}
	\frac{N}{|\Lambda|} C(\beta)<c_0,
	\label{AssumptioCCE}
	\end{equation}
	with $c_0\equiv c_0(\beta,B)\in\mathbb{R}^+$ an explicit constant, see \cite{pulvirenti2012cluster}. Let us note that Condition $(\star)$ is a low density - high temperature condition, where the potential considered satisfies Assumptions 1 and 2.
	
	Furthermore, we know that there exist constants $C,c>0$ such that  for every $N$ and $\Lambda$ the coefficients $F_{\beta,N,\Lambda}(n)$ satisfy 
	\begin{equation}
	\left|F_{\beta,N,\Lambda}(n)\right|\le C e^{-cn},\;\;\mathrm{for\;all}\;n\ge1.
	\label{absCan}
	\end{equation}
	
	The last quantity we need to introduce before stating the main results is the free energy $\F$ which is a function of the density $\rho\in(0,1)$, defined as:  
	\begin{equation}
	\mathcal{F}_{\Lambda,\beta,\zero}(\rho):=\frac{1}{\beta}\left\{\rho(\log\rho-1)-\sum_{n\ge1}\frac{1}{n+1}\mathcal{P}_{n+1}(\rho)B_{\Lambda,\beta}(n)\right\}.
	\label{FreeERL1}
	\end{equation}
	Here, $\mathcal{P}_{n+1}(\rho)$ is a polynomial of degree $n+1$ evaluated at $\rho$ given by
	\begin{equation}
	\mathcal{P}_{n+1}(\rho):=\begin{cases}\rho\left(\rho-\frac{1}{|\Lambda|}\right)\cdot\cdot\cdot\left(\rho-\frac{n}{|\Lambda|}\right)\;\;\;\;\mathrm{if}\;\frac{n}{|\Lambda|}<\rho,\\
	\\
	0\;\;\;\;\mathrm{otherwise},
	\end{cases}
	\label{PNL1}
	\end{equation}
	and $B_{\Lambda,\beta}(n),\;n\ge1$ are the same coefficients presented in \eqref{Cluster-Coefficient}.
	For all $N\in\mathbb{N}$ and $\rho_{\Lambda}:=N/|\Lambda|$, from \eqref{PNL} and \eqref{PNL1} we have 
	\begin{equation}
	\mathcal{P}_{n+1}(\rho_{\Lambda})=\rho_{\Lambda}P_{\rho_{\Lambda}|\Lambda|,|\Lambda|}(n).
	\end{equation}	
	Note that this new free energy is a version of  \eqref{FreeE} expressed using   \eqref{CanCE1}, which satisfies
	\begin{equation}
	| f_{\Lambda,\beta,\zero}(N)-\F(\rho_{\Lambda})|=|S_{|\Lambda|}(\rho_{\Lambda})|\s\frac{\log\sqrt{|\Lambda|}}{|\Lambda|}
	\label{A}
	\end{equation}	
	with $S_{|\Lambda|}(\rho_{\Lambda})$ given by \eqref{StirlingEq} and where the last inequality follows from \eqref{StirlingBound}.
	When we do not need to specify the dependence on the boundary conditions we will use the notation $\mathcal{F}_{\Lambda,\beta}(\cdot)$ and we will denote with  $\f^{(m)}_{\Lambda,\beta}(\cdot)$ and the {\it m-th} derivative of  $\f_{\Lambda,\beta}(\cdot)$.

\section{Main results}
\label{SubThs}
Now we can state the main results of this paper which will be proved in Sections \ref{ProofSec}.

\begin{theorem}[Precise Large Deviations]
	Let $\mu_0\in\R$ be a chemical potential and let $\tilde{N}$ be a fluctuation given by \eqref{GeneralDeviations} with $\alpha=1$ such that condition $(\star)$ holds. 
	Let also be $V:\R^d\rightarrow\R\cup\{\infty\}$ a pair potential which satisfies Assumptions 1 and 2 with zero boundary conditions outside a box $\Lambda\subset\mathbb{R}^d$.  
	
	Moreover, let $\tilde{\mu}_{\Lambda}\in\R$ be the chemical potential that corresponds to the supremum at  equation \eqref{GCFE1}, $\tRL:=\tN/|\Lambda|$ and $A_{\tN}$ as in \eqref{SetA}. We have: 
	\begin{equation}
	\left|\probBC\left(A_{\tilde{N}}\right)-\frac{e^{-|\Lambda|I^{GC}_{\Lambda,\beta,\zero}\left(\tRL;\bRL\right)}}{\sqrt{2\pi D_{\Lambda,\zero}(\tRL^*)|\Lambda|}}\right|\le 
	\frac{Ce^{-|\Lambda|I^{GC}_{\Lambda,\beta,\zero}\left(\tRL;\bRL\right)}}{|\Lambda|} 
	\label{P3}
	\end{equation}
	where 
	\begin{equation}
	I^{GC}_{\Lambda,\beta,\zero}\left(\tRL;\bRL\right):=\beta\left[\fgc(\tRL)-\fgc(\bRL)-\mu_0\left(\tRL-\bRL\right)\right],
	\label{LDO}
	\end{equation}
	and
	\begin{equation}
	D_{\Lambda,\zero}(\tRL^*):=\left[\beta \F''(\tRL^*)\right]^{-1}.
	\label{VAR}
	\end{equation}
	Here $\tRL^*=\tN^*/|\Lambda|$, with $\tN^*$  the number of particles where the supremum at equation \eqref{N*} occurs for $\mu=\tilde{\mu}_{\Lambda}$.
	\label{Th1}	
\end{theorem}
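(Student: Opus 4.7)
The plan is to start from
$$\probBC(A_{\tN}) \;=\; \frac{e^{\beta\mu_0\tN}\,\canBC(\tN)}{\GcanBC(\mu_0)}$$
and split this ratio, in the Bahadur--Rao spirit for precise large deviations, by pivoting through the grand-canonical partition function at the tilted activity $\tilde\mu_\Lambda$ supplied by the theorem. Multiplying and dividing by $\GcanBC(\tilde\mu_\Lambda)$ factors the probability as
$$\probBC(A_{\tN}) \;=\; \underbrace{\frac{\GcanBC(\tilde\mu_\Lambda)}{\GcanBC(\mu_0)}\,e^{\beta(\mu_0-\tilde\mu_\Lambda)\tN}}_{(\mathrm{I})}\;\cdot\;\underbrace{\frac{e^{\beta\tilde\mu_\Lambda\tN}\,\canBC(\tN)}{\GcanBC(\tilde\mu_\Lambda)}}_{(\mathrm{II})},$$
where (II) is the probability of the now-typical event $A_{\tN}$ under the tilted grand-canonical measure at activity $\tilde\mu_\Lambda$, while (I) carries the exponentially small large-deviation cost.

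Factor (I) is matched to the target exponential rate by pure Legendre algebra in finite volume. Since $(\bRL,\mu_0)$ and $(\tRL,\tilde\mu_\Lambda)$ are conjugate pairs with respect to \eqref{GCFE1}, the finite-volume analogue of \eqref{MU} yields $p_{\Lambda,\beta,\zero}(\mu_0) = \mu_0\bRL - \fgc(\bRL)$ and $p_{\Lambda,\beta,\zero}(\tilde\mu_\Lambda)=\tilde\mu_\Lambda\tRL - \fgc(\tRL)$. Substituting these into $\log(\mathrm{I})=\beta|\Lambda|\{p_{\Lambda,\beta,\zero}(\tilde\mu_\Lambda)-p_{\Lambda,\beta,\zero}(\mu_0)+(\mu_0-\tilde\mu_\Lambda)\tRL\}$ collapses it exactly to $-|\Lambda|\,I^{GC}_{\Lambda,\beta,\zero}(\tRL;\bRL)$, reproducing the exponential factor of \eqref{P3} with no further error.

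For factor (II) I sharpen \eqref{A} by retaining the Stirling remainder explicitly in \eqref{CanCE1}: the expansion $\log N! = N(\log N-1)+\tfrac12\log(2\pi N)+O(1/N)$ combined with \eqref{CanCE1} and \eqref{FreeERL1} yields, uniformly under condition $(\star)$,
$$\canBC(N) \;=\; \frac{1}{\sqrt{2\pi N}}\,e^{-\beta|\Lambda|\F(N/|\Lambda|)}\,\bigl(1+O(1/N)\bigr).$$
Plugging this into both the numerator of (II) and the sum $\GcanBC(\tilde\mu_\Lambda)=\sum_{N}e^{\beta\tilde\mu_\Lambda N}\canBC(N)$, Laplace's method applied to the latter around its saddle $\tN^*$---which by \eqref{N*} at $\mu=\tilde\mu_\Lambda$ is the argmax of $N\mapsto\beta\tilde\mu_\Lambda N-\beta|\Lambda|\F(N/|\Lambda|)$, with Gaussian variance $|\Lambda|\,D_{\Lambda,\zero}(\tRL^*)$---gives
$$\GcanBC(\tilde\mu_\Lambda) \;\approx\; \sqrt{\frac{|\Lambda|\,D_{\Lambda,\zero}(\tRL^*)}{\tN^*}}\;e^{\beta|\Lambda|[\tilde\mu_\Lambda\tRL^*-\F(\tRL^*)]}.$$
Taking the ratio and using the saddle condition $\F'(\tRL^*)=\tilde\mu_\Lambda$ together with $\tRL-\tRL^*=O(|\Lambda|^{-1})$ (from the $\F$-versus-$f$ discrepancy \eqref{A}) to expand $\F(\tRL)-\F(\tRL^*)$ to second order reduces (II) to $[2\pi|\Lambda|D_{\Lambda,\zero}(\tRL^*)]^{-1/2}(1+o(1))$, a local CLT for the tilted measure. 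Multiplying (I) and (II) delivers \eqref{P3}.

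The main obstacle is executing the Laplace step for (II) with the precision required for the $O(|\Lambda|^{-1})$ absolute error in \eqref{P3}: one needs uniform bounds on $\F$, $\F''$ and $\F'''$ coming from the absolutely convergent cluster-expansion series \eqref{absCan} to justify the second-order Taylor expansion around $\tRL^*$ with controlled remainder, a Gaussian tail estimate to truncate the summation to an $|\Lambda|^{1/2+\varepsilon}$-window around $\tN^*$, and an Euler--Maclaurin-type comparison between the Riemann sum (of step $1/|\Lambda|$ in $\rho$) and the corresponding Gaussian integral. These pieces are naturally packaged as the technical lemmas of Section~\ref{S2}.
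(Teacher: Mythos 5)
Your decomposition is exactly the paper's: multiplying and dividing by $\GcanBC(\tilde\mu_\Lambda)$ to write $\probBC(A_{\tN})$ as (I)$\cdot$(II) is literally the paper's Radon--Nikod\'ym step, with (II)$\;=\;\mathbb{P}^{\zero}_{\Lambda,\tilde{\mu}_{\Lambda}}(A_{\tN})$; the Legendre-transform algebra that collapses (I) to $\exp\{-|\Lambda|I^{GC}_{\Lambda,\beta,\zero}(\tRL;\bRL)\}$ is the paper's computation verbatim; and your Laplace/local-CLT analysis of (II) is the same in substance as the paper's factorization $\mathbb{P}^{\zero}_{\Lambda,\tilde{\mu}_{\Lambda}}(A_{\tN})=J^{C}_{\tilde\mu_\Lambda}(\tN,\tN^*)\,K(\tilde\mu_\Lambda,\tN^*)$, packaged into Lemmas~\ref{COR1} and~\ref{Lemma 3}.

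The one place where the sketch is not sound is the claim that $\tRL-\tRL^*=O(|\Lambda|^{-1})$ follows from ``the $\F$-versus-$f$ discrepancy \eqref{A}.'' Equation~\eqref{A} bounds the Stirling error between the two versions of the free energy evaluated at the \emph{same} density; it says nothing about the gap between $\tN$ (the \emph{mean} of the tilted measure, fixed via $\tRL=p'_{\Lambda,\beta,\zero}(\tilde\mu_\Lambda)$) and $\tN^*$ (the \emph{mode}, i.e.\ the maximizer of $N\mapsto e^{\beta\tilde\mu_\Lambda N}\canBC(N)$). The bound $|\tN-\tN^*|\le C$ uniformly in $\Lambda$ is established separately in the paper as Lemma~\ref{LemmaBN}, whose proof requires splitting the grand-canonical sum into intervals near/far from $\tN^*$ and re-using the Gaussian-tail estimates of Lemma~\ref{Lemma 3} applied with $\alpha=1/2$; it does not reduce to a Stirling comparison. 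Without this input, the second-order Taylor expansion of $\F(\tRL)-\F(\tRL^*)$ and the matching of the prefactor $\sqrt{\tN^*/\tN}$ to $1+O(|\Lambda|^{-1})$ in your factor (II) do not close with the required $O(|\Lambda|^{-1})$ absolute error. Relatedly, your saddle condition should read $\tilde\mu_\Lambda=\F'(\tRL^*)+S'_{|\Lambda|}(\tRL^*)$ as in~\eqref{mu_0}, not $\F'(\tRL^*)=\tilde\mu_\Lambda$ exactly: the discrepancy $S'_{|\Lambda|}(\tRL^*)$ is of order $1/|\Lambda|$ and, once $|\tN-\tN^*|=O(1)$ is in hand, contributes only $O(|\Lambda|^{-1})$ to the exponent, but this should be stated rather than silently absorbed.
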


Next, for the moderate deviations, thanks to Lemma \ref{LemmaBN}, we can recenter the fluctuation $\tN$ given by \eqref{GeneralDeviations}  around the number of particles which satisfies \eqref{N*} when $\mu=\mu_0$. Hence, denoting with $N^*$ this number of particles, we can write
\begin{equation}
\tN=N^*+u'|\Lambda|^{\alpha}
\label{GeneralDeviations1}
\end{equation}
for some $\alpha\in[1/2,1)$ and $u'$ (depending on $u$).

\begin{theorem}[Local Moderate Deviations.]
	Let $\mu_0\in\R$ be a chemical potential and $N^*$ the number of particles where the supremum at equation \eqref{N*} occurs for $\mu=\mu_0$, such that condition $(\star)$ holds. 
	Let also be $V:\R^d\rightarrow\R\cup\{\infty\}$ a pair potential  which satisfies Assumptions 1 and 2 with zero boundary conditions  outside a box $\Lambda\subset\mathbb{R}^d$.
	
	For $\tilde N$ and the set $A_{\tilde N}$ respectively given by \eqref{GeneralDeviations1} and
	\eqref{SetA} with $\alpha\in[1/2,1)$ and denoting with $\rho^*_{\Lambda}:=N^*/|\Lambda|$, we have:
	\begin{equation}
	\left|\probBC(A_{\tN})-\frac{\exp\left\{-\frac{(u')^2|\Lambda|^{2\alpha-1}}{2D^{\alpha}_{\Lambda,\zero}(\rho^*_{\Lambda})}\right\}}{\sqrt{2\pi D^{\alpha,+}_{\Lambda,\zero}(\rho^*_{\Lambda})|\Lambda|}}\right|\le \frac{2e^{-\frac{(u')^2|\Lambda|^{2\alpha-1}}{2D^{\alpha}_{\Lambda,\zero}(\rho^*_{\Lambda})}}E_{|\Lambda|}(\alpha,u',\rho^*_{\Lambda})}{\sqrt{2\pi D^{\alpha,+}_{\Lambda,\zero}(\rho^*_{\Lambda})|\Lambda|}}
	\end{equation}
	where
	\begin{equation}
	D^{\alpha}_{\Lambda, \zero}(\rho^*_{\Lambda}):=\left[\beta\F''(\rho^*_{\Lambda})+\beta\sum_{m=3}^{m(\alpha)-1}\frac{2(u')^{m-2}\F^{(m)}(\rho^*_{\Lambda})}{m!|\Lambda|^{(m-2)(1-\alpha)}}\right]^{-1},
	\label{Var2}
	\end{equation}
	\begin{equation}
	D^{\alpha,+}_{\Lambda, \zero}(\rho^*_{\Lambda}):=\left[\beta\F''(\rho^*_{\Lambda})+\beta\sum_{m=3}^{m(\alpha)-1}\frac{2(u')^{m-2}|\F^{(m)}(\rho^*_{\Lambda})|}{m!|\Lambda|^{(m-2)(1-\alpha)}}\right]^{-1}.
	\label{Var1}
	\end{equation}
	Here, $m(\alpha)$ is given by \eqref{m-Coda} and  $E_{|\Lambda|}(\alpha,u',\rho^*_{\Lambda})$ is an error term of order $|\Lambda|^{-[(m(\alpha)(1-\alpha)-1]}$ defined via cluster expansion given by \eqref{Error}. 
	
	\label{Th2}
\end{theorem}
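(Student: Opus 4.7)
The plan is to compute the ratio
\begin{equation}
\probBC(A_{\tN})=\frac{e^{\beta\mu_0\tN}\canBC(\tN)}{\GcanBC(\mu_0)}
\end{equation}
by analysing numerator and denominator separately, in both cases substituting $\log\canBC(N)$ with $-\beta|\Lambda|\F(N/|\Lambda|)$ at the cost of the Stirling error controlled by \eqref{A}, and then Taylor expanding the free energy $\F$ around $\rho^*_{\Lambda}$. The key structural fact driving the cancellation is that, since $N^*$ realises the supremum in \eqref{N*} at $\mu=\mu_0$, one has $\beta\mu_0=\beta\F'(\rho^*_{\Lambda})$, so the linear term in the Taylor expansion cancels the $e^{\beta\mu_0\tN}$ factor.

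For the numerator, I would write $\tRL-\rho^*_{\Lambda}=u'|\Lambda|^{\alpha-1}$ and Taylor expand $\F(\tRL)$ to order $m(\alpha)-1$ around $\rho^*_{\Lambda}$. After multiplying by $\beta|\Lambda|$, the $m$-th term scales as $|\Lambda|^{1-m(1-\alpha)}$, so grouping $m=2,\dots,m(\alpha)-1$ produces exactly the Gaussian exponent $-(u')^2|\Lambda|^{2\alpha-1}/(2D^{\alpha}_{\Lambda,\zero}(\rho^*_\Lambda))$ in \eqref{Var2}, while the tail $m\ge m(\alpha)$ is bounded using the exponential decay \eqref{absCan} of the cluster-expansion coefficients $B_{\Lambda,\beta}(n)$ (noting that differentiating \eqref{FreeERL1} $m$ times only affects the polynomials $\mathcal{P}_{n+1}$ and so preserves absolute convergence). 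The resulting remainder is exactly what will be packaged into the error $E_{|\Lambda|}(\alpha,u',\rho^*_\Lambda)$ of order $|\Lambda|^{-(m(\alpha)(1-\alpha)-1)}$ given by \eqref{Error}.

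For the denominator, I would repeat the cluster expansion term by term in the sum $\sum_{N\ge 0}e^{\beta\mu_0 N}\canBC(N)$. Writing $N=N^*+k$, the same Taylor argument gives, uniformly in the moderate window $|k|\le|\Lambda|^{\alpha}\log|\Lambda|$, a factor $e^{\beta\mu_0 N^*}\canBC(N^*)\exp\{-k^{2}/(2|\Lambda|D^{\alpha,+}_{\Lambda,\zero})\}(1+o(1))$; the passage from $D^{\alpha}$ to $D^{\alpha,+}$ with absolute values on $\F^{(m)}$ is what guarantees Gaussian decay (hence integrability and convergence of the comparison Riemann sum $\mapsto$ integral) independently of the sign of the corrections. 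Strict convexity of $\F$ on the cluster-expansion domain (together with condition $(\star)$) lets me control the $|k|$ beyond the window by an exponentially smaller contribution that is absorbed into $E_{|\Lambda|}$. Comparing the truncated Gaussian sum to its integral yields $\sqrt{2\pi D^{\alpha,+}_{\Lambda,\zero}(\rho^*_\Lambda)|\Lambda|}$ up to a $1+O(|\Lambda|^{-1/2})$ factor. Taking the ratio, the common factor $e^{\beta\mu_0 N^*}\canBC(N^*)$ cancels, giving the stated estimate.

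The main obstacle is the uniform control of the denominator sum: near $N^*$ the Gaussian approximation is immediate, but the Taylor remainder of $\F$ grows polynomially with $|k|=|N-N^*|$, so the value of $m(\alpha)$ from \eqref{m-Coda} must be chosen precisely so that over the whole moderate window the truncated Taylor error stays small (producing the exponent in $E_{|\Lambda|}$) while the tails beyond it are exponentially negligible, and so that both the $D^{\alpha}$ in the exponent (numerator, sign-sensitive) and the $D^{\alpha,+}$ in the pre-factor (denominator, requires uniform Gaussian decay) emerge with the correct orders. Once this balance is established the final bound \eqref{Error} follows by combining the numerator estimate with the denominator asymptotics and using $|e^{x}-1|\le|x|e^{|x|}$ on the relative error.
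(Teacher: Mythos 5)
Your decomposition and overall strategy -- isolating numerator and denominator, replacing $\log\canBC$ by the cluster-expanded free energy $-\beta|\Lambda|\F$ up to the Stirling error, Taylor expanding around $\rho^*_{\Lambda}$, and summing a Gaussian for the normalization -- is exactly the route taken in the paper (there packaged as $\probBC(A_{\tN})=J^C_{\mu_0}(\tN,N^*)K(\mu_0,N^*)$ together with Lemmas~\ref{Lemma 1}, \ref{COR1} and \ref{Lemma 3}). However, the ``key structural fact'' you invoke, namely that $\beta\mu_0=\beta\F'(\rho^*_\Lambda)$ so that the linear term cancels exactly, is not correct. The supremum in \eqref{N*} is taken for the true finite-volume free energy $f_{\Lambda,\beta,\zero}$, and from \eqref{A} one has $f_{\Lambda,\beta,\zero}=\F+S_{|\Lambda|}$ where $S_{|\Lambda|}$ is the Stirling correction \eqref{StirlingEq}. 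Hence the first-order condition, stated as \eqref{mu_0} in the paper, reads
\begin{equation}
\mu_0=\F'(\rho^*_{\Lambda})+S'_{|\Lambda|}(\rho^*_{\Lambda}),
\end{equation}
so that $\mu_0-\F'(\rho^*_{\Lambda})=S'_{|\Lambda|}(\rho^*_{\Lambda})$ is not zero but only of order $1/|\Lambda|$ by \eqref{Stirling1}. The surviving linear term $\beta|\Lambda|(\mu_0-\F'(\rho^*_\Lambda))(\tRL-\rho^*_\Lambda)=O(|\Lambda|^{\alpha-1})$ must therefore be carried along; the paper accounts for it explicitly as the second summand inside the error term \eqref{Error}.

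This is not merely a bookkeeping point: Section~\ref{Sub1} explains that if one instead perturbs around the grand-canonical mean $\bar N_\Lambda$ the mismatch $\mu_0-\F'(\bar\rho_\Lambda)$ is only $O(|\partial\Lambda|/|\Lambda|)$, which dominates the Gaussian term and ruins the expansion. The whole reason to recenter at $N^*$ is to upgrade that mismatch to $O(1/|\Lambda|)$, \emph{not} to make it vanish. Your argument should be amended to keep the $(\mu_0-\F'(\rho^*_\Lambda))$ contribution and show (via \eqref{Stirling1}) that it is absorbed into $E_{|\Lambda|}(\alpha,u',\rho^*_\Lambda)$; otherwise the claimed error bound cannot be derived. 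The rest of the proposal (choice of $m(\alpha)$ balancing the Taylor tail across the moderate window, the appearance of $D^{\alpha,+}$ with absolute values to ensure uniform Gaussian decay in the normalization, and exponentially small control of the far tails via convexity) matches the paper's Lemmas~\ref{COR1} and~\ref{Lemma 3} in substance.
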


\begin{corollary}[Local Central Limit Theorem.]
	Under the same assumptions as in Theorem \ref{Th2} for $\alpha=1/2$ we have that
	\begin{equation}
	\left|\probBC(A_{\tN})-\frac{\exp\left\{-\frac{(u')^2}{2 D_{\Lambda,\zero}(\rho^*_{\Lambda})}\right\}}{\sqrt{2\pi D_{\Lambda,\zero}(\rho^*_{\Lambda})|\Lambda|}}\right|\le \frac{2 e^{-\frac{(u')^2}{2 D_{\Lambda,\zero}(\rho^*_{\Lambda})}}E_{|\Lambda|}(1/2,u',\rho^*_{\Lambda})}{\sqrt{2\pi D_{\Lambda,\zero}(\rho^*_{\Lambda})|\Lambda|}},
	\end{equation}
	where, using \eqref{VAR}, 
	\begin{equation}
	D_{\Lambda,\zero}(\rho^*_{\Lambda})=\left[\beta\F''(\rho^*_{\Lambda})\right]^{-1}
	\end{equation}
	and $E_{|\Lambda|}(1/2,u',\rho^*_{\Lambda})$ is an error term of order $|\Lambda|^{-1/2}$ defined via cluster expansion and given by \eqref{Error}. 
	\label{Corollary}
\end{corollary}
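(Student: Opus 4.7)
The plan is to derive this corollary as an immediate specialization of Theorem \ref{Th2} at the critical exponent $\alpha=1/2$, which is the natural scale of Gaussian fluctuations.

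First I would substitute $\alpha=1/2$ into the bound of Theorem \ref{Th2}. The factor $|\Lambda|^{2\alpha-1}$ appearing in the Gaussian exponent collapses to $1$, so the leading term becomes $\exp\{-(u')^2/(2 D^{1/2}_{\Lambda,\zero}(\rho^*_{\Lambda}))\}$ divided by $\sqrt{2\pi D^{1/2,+}_{\Lambda,\zero}(\rho^*_{\Lambda})|\Lambda|}$. What remains is to show that the two ``effective variances'' $D^{1/2}_{\Lambda,\zero}$ and $D^{1/2,+}_{\Lambda,\zero}$ reduce to the quantity $D_{\Lambda,\zero}(\rho^*_{\Lambda})$ of \eqref{VAR}, and that the error term carries the claimed order $|\Lambda|^{-1/2}$.

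Next I would unpack the definition of $m(\alpha)$ recalled in \eqref{m-Coda}. Heuristically, the $m$-th Taylor contribution of $\beta\F$ around $\rho^*_{\Lambda}$ to the log-probability is of order $|\Lambda|^{-(m-2)(1-\alpha)+(2\alpha-1)}$, and $m(\alpha)$ is the smallest integer for which this contribution becomes a genuine remainder. At $\alpha=1/2$ this order is $|\Lambda|^{-(m-2)/2}$, which is already $o(1)$ at $m=3$, so $m(1/2)=3$. Consequently the sums $\sum_{m=3}^{m(\alpha)-1}$ in both \eqref{Var2} and \eqref{Var1} are empty, and
\[
D^{1/2}_{\Lambda,\zero}(\rho^*_{\Lambda})=D^{1/2,+}_{\Lambda,\zero}(\rho^*_{\Lambda})=[\beta\F''(\rho^*_{\Lambda})]^{-1}=D_{\Lambda,\zero}(\rho^*_{\Lambda}),
\]
which is exactly the variance appearing in the corollary.

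Finally, the error estimate $E_{|\Lambda|}(\alpha,u',\rho^*_{\Lambda})=O(|\Lambda|^{-[m(\alpha)(1-\alpha)-1]})$ from \eqref{Error} specializes at $\alpha=1/2$ and $m(1/2)=3$ to $O(|\Lambda|^{-1/2})$, matching the order stated in the corollary. Inserting these simplifications into the inequality of Theorem \ref{Th2} produces the statement verbatim. The only genuine step to verify is the identification $m(1/2)=3$ via \eqref{m-Coda}, which I expect to be essentially definitional; once this is granted, the rest of the proof is pure bookkeeping with no further analysis needed.
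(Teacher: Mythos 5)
Your argument is correct and coincides with the paper's (one-line) proof, which simply invokes Theorem \ref{Th2} at $\alpha=1/2$. You usefully spell out the only nontrivial check — that $m(1/2)=3$ by \eqref{m-Coda}, making the sums in \eqref{Var2}--\eqref{Var1} empty so that $D^{1/2}_{\Lambda,\zero}=D^{1/2,+}_{\Lambda,\zero}=D_{\Lambda,\zero}(\rho^*_{\Lambda})$ and $E_{|\Lambda|}=O(|\Lambda|^{-1/2})$ — but this is exactly the specialization the paper leaves implicit.
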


\section{General theory of large and moderate deviations vs our approach: a brief comparison}
\label{Sub1}
In this section we compare this work to the exiting approach. Recalling the general theory of large deviations (\cite{den2008large}, \cite{dobrushin1994large}, \cite{ellis2007entropy}, \cite{georgii1994large}),
for a fixed chemical potential $\mu_0$ we define the (finite volume) logarithmic generating function of the moments of \eqref{GcProbE}:
\begin{equation}
L^{\zero}_{\Lambda,\beta, \mu_0}(\mu):=\log\left[\sum_{N\ge0}\probBC(A_N)e^{\beta\mu N}\right],
\label{H}
\end{equation}
where the set $A_{N}$ is given by \eqref{SetA} for a $N\in\mathbb{N}$ instead of $\tN$.

	Let us note that from \eqref{FVP} and \eqref{GcProbE} we have 
	\begin{equation}
	L^{\zero}_{\Lambda,\beta,\mu_0}(\mu)=\beta|\Lambda|\left[p_{\beta,\Lambda,\zero}(\mu+\mu_0)-p_{\beta,\Lambda,\zero}(\mu_0)\right].
	\label{relation1}
	\end{equation}
	Furthermore,  from \eqref{MeanValue}, \eqref{GeneralVariance} and \eqref{H} we get the following equivalences 
	\begin{equation}
	\bRL|\Lambda|=\frac{1}{\beta}\frac{d}{d\mu}L^{\zero}_{\Lambda,\beta,\mu_0}(\mu)\bigg|_{\mu=0},
	\end{equation}
	\begin{equation}
	\sigma^{2}_{\Lambda,\zero}(\mu_0)|\Lambda|=\frac{1}{\beta^2}\frac{d^2}{d\mu^2}L^{\zero}_{\Lambda,\beta,\mu_0}(\mu)\bigg|_{\mu=0}.
	\end{equation}
	In general, we also denote by $G^{m}_{\Lambda,\beta,\zero}$ the $m$-th momentum $(m>2)$, which can be defined as:
	\begin{equation}
	G^{m}_{\Lambda,\zero}:=\frac{1}{\beta^{m}}\frac{d^m}{d\mu^m}L^{\zero}_{\Lambda,\beta,\mu_0}(\mu)\bigg|_{\mu=0}.
	\end{equation}
Let us define the characteristic function as
\begin{equation}\label{char}
\varphi_{\Lambda,\mu'}(t):=\sum_{N\ge0}\mathbb{P}_{\Lambda,\mu'}^{\zero}(A_{N})e^{itN},
\end{equation} 
where for $\mu'=\mu+\mu_0$,
\begin{equation}\label{char2}
\mathbb{P}_{\Lambda,\mu+\mu_0}^{\zero}(A_{N})=\exp\left\{-L^{\zero}_{\Lambda,\beta,\mu_0}(\mu)+\beta\mu N\right\}\probBC(A_{N})
\end{equation}
represents the ``excess (by $\mu$) probability measure".

	We will denote with $\sim$ the asymptotic behavior of two sequences, i.e.,  $a_n \sim b_n\iff\lim_{n\to\infty}\frac{a_n}{b_n}=1$. Hence, considering a deviation $\tN$ given by \eqref{GeneralDeviations} with $\alpha=1$, in order to compute its probability one can use the excess measure optimizing over $\mu$
	such that $\mathbb{P}_{\Lambda,\mu+\mu_0}^{\zero}(A_{\tilde N})\sim 1$, i.e., by making it ``central"
	with respect to the new measure. In this way we obtain that asymptotically as $\Lambda\to\mathbb R^d$:
\begin{equation*}
\mathbb{P}_{\Lambda,\mu_0}^{\zero}(A_{\tilde N})
\sim \exp\left\{- \mathcal{I}^{\zero}_{\Lambda,\beta,\mu_0}(\tN)\right\}\end{equation*}
where 
\begin{equation}
\mathcal{I}^{\zero}_{\Lambda,\beta,\mu_0}(\tN):=\sup_{\mu}\left\{\beta\mu \tN-L^{\zero}_{\Lambda,\beta,\mu_0}(\mu)  \right\}.
\label{I}
\end{equation}

However, if one needs a more precise formula 
one way is by inverting \eqref{char}:
\begin{equation}\label{invert}
\mathbb{P}_{\Lambda,\tilde\mu_{\Lambda}}^{\zero}(A_{\tilde N})=
\frac{1}{2\pi}\int_{-\pi}^{\pi}e^{-it\tN}\varphi_{\Lambda,\tilde\mu_{\Lambda}}(t)dt,
\end{equation} 
where by $\tilde\mu_{\Lambda}$ we denote the optimal chemical potential found in \eqref{I} and where
\begin{equation}
\frac{1}{2\pi}\int_{-\pi}^{\pi}e^{-it\tN}\varphi_{\Lambda,\tilde\mu_{\Lambda}}(t)dt
= \left(\sqrt{2\pi\sigma^2_{\Lambda,\e}(\tilde{\mu}_{\Lambda})|\Lambda|}\right)^{-1}(1+\ldots)\nonumber
\label{InvCarFun}
\end{equation}

 For more details for the above equality we refer to \cite{den2008large} sections I.3, I.4 and \cite{dobrushin1994large} section 2.1, formulas (2.1.15)-(2.1.20) and (2.1.31)-(2.1.34). In this paper we provide an alternative and more direct way for computing this pre-factor. 

For later use note that from \eqref{relation1}, by normalizing \eqref{I} and letting $\rho_{\Lambda}=N/|\Lambda|$ we have:

	\begin{eqnarray}
	I^{\zero}_{\Lambda,\beta,\mu_0}(\tRL) & := & \frac{1}{|\Lambda|}\mathcal{I}^{\zero}_{\Lambda,\beta,\mu_0}(\tN)=\nonumber\\
	& = & \sup_{\mu}\left\{\beta\mu\tRL -\beta p_{\Lambda,\beta,\zero}(\mu+\mu_0)+\beta p_{\Lambda,\beta,\zero}(\mu_0)\right\}\nonumber\\
	& = & \sup_{\mu}\left\{\beta\mu\tRL +\beta\mu_0\tRL- \beta\mu_0\tRL-\beta p_{\Lambda,\beta,\zero}(\mu+\mu_0)+\beta\mu_0\bRL-\beta\mu_0\bRL\right.\nonumber\\
	&&\;\;\;\;\;\;\;\;\;\;\;\;\;\;\;\;\;\;\;\;\;\;\;\;\;\;\;\;\;\;\;\;\;\;\;\;\;\;\;\;\;\;\;\;\;\;\;\;\;\;\;\;\;\;\;\;\;\;\;\;\;\;\;\;\;\;\;\;\;\;\;\;\;\;\;\;\;\;\;\;\;\left.+\beta p_{\Lambda,\beta,\zero}(\mu_0)\right\}\nonumber\\
	& = & \sup_{\mu'}\left\{\beta\mu'\tRL-\beta p_{\Lambda,\beta,\zero}(\mu')-[\beta\mu_0\bRL-\beta p_{\Lambda,\beta,\zero}(\mu_0)]+\beta\mu_0(\bRL-\tRL)\right\}\nonumber\\
	& = & \beta f_{\Lambda,\beta,\zero}^{GC}(\tRL)-\beta f_{\Lambda,\beta,\zero}^{GC}(\bRL)+\beta\mu_0(\bRL-\tRL),
	\label{FV-I}
	\end{eqnarray}
where $\mu'=\mu+\mu_0$ and we used the grand-canonical free energy define in \eqref{GCFE1}.

Note that we have also used the fact that 
$\bar\rho_\Lambda=p_{\Lambda,\beta,\zero}'(\mu_0)$ and
$\beta f_{\Lambda,\beta,\zero}^{GC}(\bar{\rho}_{\Lambda})=\beta\mu_0\bar{\rho}_{\Lambda}-\beta p_{\Lambda,\beta,\zero}(\mu_0)$. In the limit $|\Lambda|\rightarrow\infty$ we obtain (for the moment assuming that $f_{\Lambda,\beta,\zero}^{GC}\to f_\beta$ and $\bar\rho_\Lambda\to\rho_0$)

	\begin{equation}\begin{split}
	\lim_{|\Lambda|\rightarrow\infty}\frac{1}{|\Lambda|}\log\mathbb{P}^{\zero}_{\Lambda,\mu_0}\left(A_{\tN}\right)=- I_\beta(\tilde\rho;\rho_0),
	\end{split}
	\label{Classical-LDP}
	\end{equation} 
where $\rho_0$ is given in \eqref{MU} and
\begin{equation}
I_\beta(\tilde\rho;\rho_0):=\beta f_{\beta}(\tilde\rho)-\beta f_{\beta}(\rho_0)-\beta f'_{\beta}(\rho_0)(\tilde\rho-\rho_0).
\label{LD-F}
\end{equation}

	Next, one can go a step further and study the local moderate deviations ($\alpha\in[1/2, 1)$ in \eqref{GeneralDeviations}) by taking the Taylor expansion of \eqref{I} around $\bRL|\Lambda|$. Doing this we will find that  $\mathcal{I}^{\zero}_{\Lambda,\beta,\mu_0}(\bRL|\Lambda|)$ and $(\mathcal{I}^{\zero}_{\Lambda,\beta,\mu_0})'(\bRL|\Lambda|)$ are equal to zero. This happens because, using the fact that $L^{\zero}_{\Lambda,\beta,\mu_0}(\mu)$ is a strictly convex function of $\mu$, the supremum in \eqref{I} is obtained at $\mu=0$ when we consider $\bRL|\Lambda|$ instead of $\tN$. 
	Hence, we get:
	\begin{equation}
	\mathcal{I}^{\zero}_{\Lambda,\beta,\mu_0}(\tN)=\frac{(\tN-\bRL|\Lambda|)^2}{2|\Lambda|\sigma^2_{\Lambda,\zero}(\mu_0)}+\sum_{j\ge3}\frac{Q^{(j)}_{\Lambda,\zero}}{j!}\left(\frac{\tN-\bRL|\Lambda|}{|\Lambda|}\right)^j,
	\label{IT}
	\end{equation}
	where the coefficients $Q^{(j)}_{\Lambda,\zero}$ are polynomials which can be computed via the momenta (see eq. (1.2.20)-(1.2.23) of \cite{dobrushin1994large}) and where we used \eqref{FV-I} and the fact that

	\begin{equation}
	(f^{GC}_{\Lambda,\beta,\zero})''(\bRL)=\frac{1}{ p''_{\Lambda,\beta,\zero}(\mu_0)}=\frac{1}{\beta\sigma^2_{\Lambda,\zero}(\mu_0)}.
	\label{GC-Variance}
	\end{equation}

In what follows we explain our strategy working with the canonical partition function.
As it will be argued later in detail, having the cluster expansion of the canonical ensemble, one can compute a given deviation directly without need of following the above strategy.
From \eqref{GcProbE} we have
\begin{equation}
\probBC(A_{\tN})=\frac{e^{\beta\mu_0\tN}\canBC(\tN)}{\GcanBC(\mu_0)},
\label{GcProbE1.1}
\end{equation}	
which can be rewritten as 
\begin{equation}
\probBC (A_{\tN})=J^{C}_{\mu_0}(\tN,\bN)K(\mu_0,\bN).
\label{GcProbE1}
\end{equation}
Here $\bar N_\Lambda$ is given by \eqref{MeanValue} and for $\mu\in\R$ and $N,N'\in\mathbb{N}$ we defined
\begin{equation}
J^{C}_{\mu}(N,N'):=\frac{e^{\beta\mu N}\canBC(N)}{e^{\beta\mu N'}\canBC(N')}
\label{NUM-C}
\end{equation}
and
\begin{equation}
K(\mu,N):=\left(\frac{\GcanBC(\mu)}{e^{\beta\mu N}\canBC(N)}\right)^{-1}.
\label{Def-Normalizzazione}
\end{equation} 
The strategy is similar to the one before and it consists of perturbing around $\bN$, however we will see next that we have to slightly vary this choice.
From \eqref{FreeE} the term $J^{C}_{\mu_0}(\tN,\bN)$ has the following form 

	\begin{equation}
	J^{C}_{\mu_0}(\tN,\bN)=\exp\left\{ \beta\mu_0(\tN-\bN)+|\Lambda|\beta f_{\Lambda,\beta,\zero}(\bN)-|\Lambda|\beta f_{\Lambda,\beta,\zero}(\tN)\right\},
	\label{J}
	\end{equation}
which is the finite volume version of \eqref{LD-F} viewed in the canonical ensemble.
Furthermore,  as we will see in the sequel, working with the canonical partition function one can also perform a direct calculation for the pre-factor, since from \eqref{GrandCan0} and \eqref{Def-Normalizzazione} we have 

\begin{equation}\label{KwithJ}
[K(\mu_0,\bar N_\Lambda)]^{-1}=\sum_{N\ge0}J^{C}_{\mu_0}(N,\bN).
\end{equation}

	For the moderate deviations, in order to compute $J^C_{\mu_0}(\tN,\bN)$ and $K(\mu_0,\bN)$, we will use the free energy $\F$ defined in \eqref{FreeERL1} and related to the one defined in  \eqref{FreeE} via \eqref{A}. Hence, performing a Taylor expansion around $\bRL$  we obtain
\begin{equation}\begin{split}
\bigg|\frac{\log J^{C}_{\mu_0}(\tilde {N},\bN)}{|\Lambda|}-\bigg[\beta(\mu_0-\F'(\bRL))(\tRL-\bRL)-\F''(\bRL)\frac{(\tRL-\bRL)^2}{2}
\\
+o((\tRL-\bRL)^2)\bigg]\bigg|\s\frac{\log\sqrt{|\Lambda|}}{|\Lambda|}\nonumber
\end{split}
\end{equation}
where now $\tRL-\bRL=u/|\Lambda|^{1/2}$. 
As we expect the term $\F''(\bRL)\frac{(\tRL-\bRL)^2}{2}$ to be dominant, we find ourselves in trouble since a rough estimate from
\cite{pulvirenti2015finite} (see also Appendix \ref{appendice1}) for the finite volume corrections of the free energy - also recalling from \eqref{MU} that $\mu_0=f_\beta'(\rho_0)$ - gives that
\begin{equation}\label{rough}
(\mu_0-\F'(\bRL))(\tRL-\bRL)\sim\frac{|\partial\Lambda|}{|\Lambda|^{3/2}}>>\frac{1}{|\Lambda|}\sim\F''(\bRL)\frac{(\tRL-\bRL)^2}{2}.\nonumber
\end{equation}
The remedy will come from the fact that we can get an improved estimate when the finite volume estimate is done at the density which corresponds to the supremum of the ``canonical" Legendre transform. As before, we call this density $\rho^*_{\Lambda}=N^*/|\Lambda|$, where $N^*$ is the number of particles where the supremum at equation \eqref{N*} occurs when $\mu=\mu_0$. Note that this is similar with what happens in the grand canonical with the difference that now we should not perturb around the ``grand-canonical" $\bar N_\Lambda$, but instead, around its slightly different ``canonical" counterpart ($N^*$). 

As we will see later in Remark \ref{RemPre} we have that
\begin{equation} \lim_{|\Lambda|\rightarrow\infty}\frac{N^*}{|\Lambda|}=\lim_{|\Lambda|\rightarrow\infty}\frac{\bN}{|\Lambda|}=\rho_0.
\label{lim}
\end{equation}	
Moreover,  $\rho^*_{\Lambda}$ gives us an expression of the chemical potential in terms of canonical partition function and canonical free energy. Indeed from \eqref{A} and \eqref{N*}  we observe that the function
\begin{equation}
\rho\mapsto \F(\rho)+S_{|\Lambda|}(\rho),
\end{equation}
has a maximum at $\rho^*_{\Lambda}$. Thus,
\begin{equation}
\mu_0=\F'(\rho^*_{\Lambda})+S'_{|\Lambda|}(\rho^*_{\Lambda}),
\label{mu_0}
\end{equation} 
with $S'_{|\Lambda|}(\rho^*_{\Lambda})$ given by \eqref{Stirling1} and it is such that $S'_{|\Lambda|}(\rho^*_{\Lambda})\;\lesssim\;1/|\Lambda|$. 
Equation \eqref{mu_0} implies that the system at finite volume pushes us to consider as center of deviation the value $N^*$ instead of $\bN$. Relation \eqref{lim} implies that the density $N^*$ preferred by the system has the same limit as $\bN$. But at finite volume, it could happen that it is distant from $\bN$ more than $|\Lambda|^{1/2}$ making the study of small fluctuations irrelevant. As we prove in Lemma \ref{LemmaBN} this is not the case. 


\section{Proofs of the Theorems}
\label{ProofSec}

In this section we give the proofs of the main results, which are based on some technical Lemmas presented in the next Section. Moreover, we will give some remarks to clarify the relations between the canonical, gran-canonical and thermodynamic quantities involved in the problem and presented above.  

\begin{proof} [Proof of Theorem \ref{Th1}]
	We rewrite $\probBC(A_{\tN})$ as follows:
	\begin{equation}\label{option1}
	\probBC(A_{\tN})=\frac{\GcanBC(\tilde{\mu}_{\Lambda})e^{\beta\mu_0\tN}}{\GcanBC(\mu_0)e^{\beta\tilde{\mu}_{\Lambda}\tN}}\mathbb{P}^{\zero}_{\Lambda,\tilde{\mu}_{\Lambda}}(A_{\tN}).
	\end{equation}
	In the previous one we did the Radon-Nikod\'ym derivative of our probability measure with respect to the one with $\tilde{\mu}_{\Lambda}$ instead of $\mu_0$. Note that the definition of $\tilde{\mu}_{\Lambda}$ given via \eqref{GCFE1}, i.e., such that 
	\begin{equation}
	\beta\fgc(\tRL)=\beta \tilde{\mu}_{\Lambda}\tRL-\beta p_{\Lambda,\beta,\zero}(\tilde{\mu}_{\Lambda}),
	\label{f-GC_Tmu}
	\end{equation}
	is equivalent to define implicitly $\tilde{\mu}_{\Lambda}$ as the chemical potential such that
	\begin{equation}
	\frac{\tilde{N}}{|\Lambda|}=\mathbb{E}^{\zero}_{\Lambda,\tilde{\mu}_{\Lambda}}\left[\frac{N}{|\Lambda|}\right]=\frac{\partial}{\partial\mu}p_{\Lambda,\beta,\zero}(\mu)\bigg|_{\mu=\tilde{\mu}_{\Lambda}}.
	\label{Tilde-mu}
	\end{equation}
	Moreover, from \eqref{relation1} and \eqref{FV-I} we have that this $\tilde{\mu}_{\Lambda}$ is equal to the one which satisfies \eqref{I}.

	From \eqref{FVP}, \eqref{GCFE1}, \eqref{LDO} and \eqref{f-GC_Tmu}  we get
	\begin{eqnarray}
	\frac{\GcanBC(\tilde{\mu}_{\Lambda})e^{\beta\mu_0\tN}}{\GcanBC(\mu_0)e^{\beta\tilde{\mu}_{\Lambda}\tN}} &= &\exp\left\{|\Lambda|\left[\beta\mu_0\tN-\beta\tilde{\mu}_{\Lambda}\tN+\beta p_{\Lambda,\beta,\zero}(\tilde{\mu}_{\Lambda})-\beta p_{\Lambda,\beta,\zero}(\mu_0)\pm\beta\mu_0\bN\right]\right\}\nonumber
	\\
	&=&\exp\left\{|\Lambda|\left[\beta \fgc(\bRL)-\beta\fgc(\tRL)+\beta\mu_0(\tRL-\bRL)\right]\right\}\nonumber
	\\
	&=& \exp\left\{- |\Lambda| I^{GC}_{\Lambda,\beta,\zero}(\tRL;\bRL)\right\}.
	\end{eqnarray}
	On the other hand, denoting with $\tN^*$ the number of particles such that 
	\begin{equation}
	\sup_{N}\left\{e^{\beta\tilde{\mu}_{\Lambda}N}\canBC(N)\right\}=e^{\beta\tilde{\mu}_{\Lambda}\tN^*}\canBC(\tN^*),
	\end{equation}
	using \eqref{NUM-C} and \eqref{Def-Normalizzazione} we have
	\begin{equation}
	\mathbb{P}^{\zero}_{\Lambda,\tilde{\mu}_{\Lambda}}(A_{\tN})=J^{C}_{\tilde{\mu}_{\Lambda}}(\tN,\tN^*)K(\tilde{\mu}_{\Lambda},\tN^*).
	\end{equation}
	The novelty here is that we compute the above term using cluster expansions instead
	of inverting the characteristic function as in \eqref{invert}.
	First, we note that from Lemma \ref{LemmaBN} we have
	\begin{equation}\label{est}
	|\tN-\tN^*|\le C,
	\end{equation}
	for some $C>0$ which does not depend on $\Lambda$.
	Then, applying Lemma \ref{COR1} (using the cluster expansion \eqref{CanCE1}) we find
	\begin{eqnarray}
	J^{C}_{\tilde{\mu}_{\Lambda}}(\tN,\tN^*)&=&\exp\left\{ S'_{|\Lambda|}(\tRL^*)(\tN-\tN^*)-\sum_{m\ge 2}\frac{(\tN-\tN^*)^{m}}{|\Lambda|^{m-1}}\frac{\F^{(m)}(\tRL^*)}{m!}+ |\Lambda|S_{|\Lambda|}(\tRL^*)\right\}\nonumber
	\\
	&\s &\exp\left\{|\Lambda|S_{|\Lambda|}(\tRL^*)\right\}\left(1+\frac{1}{|\Lambda|}\right),
	\label{Eq1LD}
	\end{eqnarray}
	since \eqref{est} and \eqref{Stirling1} and where $S_{|\Lambda|}(\rho^*_{\Lambda})$ is given by \eqref{StirlingEq} with the property \eqref{StirlingBound}.
	
	The study of $K(\tilde{\mu}_{\Lambda},\tN^*)$ is the same as the one done in Lemma \ref{Lemma 3} where now we consider $\tN^*$ as center of fluctuations of order 1/2.
	Hence the conclusion follows from 
	\begin{eqnarray}
	K(\tilde{\mu}_{\Lambda},\tN^*)\le e^{-|\Lambda|S_{|\Lambda|}(\tRL^*)}\left[\sqrt{2\pi D_{\Lambda,\zero}(\tRL^*)|\Lambda|}\left(1-\frac{C}{\sqrt{|\Lambda|}}\right)\right]^{-1}
	\end{eqnarray}	
	and
	\begin{equation}
	K(\tilde{\mu}_{\Lambda},\tN^*)\ge e^{-|\Lambda|S_{|\Lambda|}(\tRL^*)}\left[\sqrt{2\pi D_{\Lambda,\zero}(\tRL^*)|\Lambda|}\left(1+\frac{C}{\sqrt{|\Lambda|}}\right)\right]^{-1}
	\end{equation}
	for some $C\in\R^+$ independent on $\Lambda$.
\end{proof}

\begin{remark} In the proof of Theorem \ref{Th1}, instead of \eqref{option1} we could try with the canonical partition function and obtain:
	\begin{equation}\label{option2}
	\probBC(A_{\tN})=J^C_{\mu_0}(\tN,N^*)K(\mu_0,N^*),
	\end{equation}
	where
	\begin{equation}
	J^C_{\mu_0}(\tN,N^*)=\exp\left\{-|\Lambda|\left[\beta f_{\Lambda,\beta,\zero}(\tN)-\beta f_{\Lambda,\beta,\zero}(N^*)-\beta\mu_0(\tN-N^*)\right]\right\}.
	\label{Canonical_LDP}
	\end{equation}
	In this case, the normalization $K(\mu_0,N^*)$ can only be given in terms of the full series
	of derivatives $\F^{(m)}(\rho^*_{\Lambda})$, $m\geq 2$ giving:
	\begin{eqnarray}
	K(\mu_0,N^*) & = & \sum_{N\in I}\exp\left\{-\frac{(N-N^*)^2}{2|\Lambda|}\left[-2\beta b_{|\Lambda|}(\rho^*_{\Lambda})\frac{|\Lambda|}{N-N^*}+\rho^*_{\Lambda}B_{|\Lambda|}(\rho^*_{\Lambda})\left(\frac{|\Lambda|}{N-N^*}\right)^2\right.\right.\nonumber
	\\
	&& \left.\left.+\beta \F''(\rho^*_{\Lambda})+\sum_{m\ge3}\frac{2\F^{(m)}(\rho^*_{\Lambda})}{m!}\left(\frac{N-N^*}{|\Lambda|}\right)^{m-2}\right]\right\}+O(e^{-c|\Lambda|}  ).
	\label{Norm-LD}
	\end{eqnarray}
	Instead, when we perturbed around $\tilde N^*$ in $K(\tilde{\mu}_{\Lambda},\tN^*)$ it implied that the second derivative is dominant and hence
	written as a Gaussian integral.
\end{remark}
\begin{remark}
	Let us note that equations \eqref{LDO} and \eqref{Canonical_LDP} give the same infinite volume functional. 
	For the canonical ensemble, from \cite{pulvirenti2015finite} we have that
	\begin{equation}
	| I_{\beta}(\tilde{\rho};\rho_0)-I^{C}_{\Lambda,\beta,\zero}(\tN;N^*)|\;\s\; \frac{|\partial\Lambda|}{|\Lambda|},
	\end{equation}
	where $I^{C}_{\Lambda,\beta,\zero}(\tN;N^*):=\beta f_{\Lambda,\beta,\zero}(\tN)-\beta f_{\Lambda,\beta,\zero}(N^*)-\beta\mu_0(\tN-N^*)$.
	Instead, using the fact that Lemma \ref{Lemma 3} and Lemma \ref{LemmaBN} imply
	\begin{eqnarray}
	\left|\beta\fgc(\hat{\rho}_{\Lambda})-\beta f_{\Lambda,\beta,\zero}(\hat{N}^*)\right|& \le&  \frac{1}{|\Lambda|}\log\left[\frac{\sum_{N\ge0}e^{\beta\hat{\mu} N}\canBC(N)}{e^{\beta\hat{\mu} \hat{N}^*}\canBC(\hat{N}^*)}\right]\nonumber
	\\
	&+&\beta \hat{\mu}\frac{\hat{\rho}_{\Lambda}-\hat{\rho}^*_{\Lambda}}{|\Lambda|}\nonumber
	\;\s\;\frac{\log\sqrt{|\Lambda|}}{|\Lambda|},
	\end{eqnarray}
	for all $\hat{\mu},\;\hat{N}$ and $\hat{N}^*$ related in the sense of \eqref{MeanValue} and \eqref{N*}, we get
	\begin{equation}
	| I^{GC}_{\Lambda,\beta,\zero}(\tRL;\bRL)-I^{C}_{\Lambda,\beta,\zero}(\tN;N^*)|\;\s\; \frac{\log{\sqrt{|\Lambda|}}}{|\Lambda|}.
	\end{equation}
	
	Furthermore, we have 
	\begin{equation}
	\left|D_{\Lambda,\zero}(\tRL^*)-\sigma^2_{\Lambda,\zero}(\tilde{\mu}_{\Lambda})\right|\;\s\;\frac{1}{\sqrt{|\Lambda|}}.
	\label{ConfrontoVarianze}
	\end{equation}
	Similarly, denoting $\sigma^2_{\infty}(\tilde{\mu}):=\frac{1}{\beta} p''_{\beta}(\tilde{\mu})= \lim_{\Lambda\rightarrow\R^d}\sigma^2_{\Lambda,\zero}(\tilde{\mu}_{\Lambda})$, we obtain that
	\begin{equation}
	|D_{\Lambda,\zero}(\tRL^*)-\sigma^2_{\infty}(\tilde{\mu})|\;\s\;\frac{|\partial\Lambda|}{|\Lambda|},
	\label{SigmaINF}
	\end{equation}
	thanks to Lemma \ref{LemmaA1} and the fact that, from \eqref{FreeELT}  and the equivalent formulation of \eqref{MU} for $\tilde{\mu}$ and $\tilde{\rho}$, we have 
	\begin{equation}
	f''_{\beta}(\tilde{\rho})=\frac{\beta}{\sigma^2_{\infty}(\tilde{\mu})},
	\end{equation}
	where $\tilde{\rho}:=\lim_{\Lambda\rightarrow\R^+}\tilde{\rho}_{\Lambda}$.
	\label{Remark1}
\end{remark}	

\begin{proof} [Proof of Theorem \ref{Th2}] 
	From \eqref{NUM-C}, \eqref{Def-Normalizzazione} we have 
	\begin{equation}
	\probBC(A_{\tN})=J^{C}_{\mu_0}(\tN,N^*)K(\mu_0,N^*).
	\end{equation}
	Then using Lemma \ref{COR1} we have 
	\begin{equation}
	J^{C}_{\mu_0}(\tN,N^*)\ge\exp\left\{-\frac{(u')^2|\Lambda|^{2\alpha-1}}{2D^{\alpha}_{\Lambda,\beta,\zero}(\rho^*_{\Lambda})}+|\Lambda|S_{|\Lambda|}(\rho^*_{\Lambda})-E_{|\Lambda|}(\alpha,u',\rho^*_{\Lambda})\right\}
	\end{equation}
	and
	\begin{equation}
	J^{C}_{\mu_0}(\tN,N^*)\le \exp\left\{-\frac{(u')^2|\Lambda|^{2\alpha-1}}{2D^{\alpha}_{\Lambda,\beta,\zero}(\rho^*_{\Lambda})}+|\Lambda|S_{|\Lambda|}(\rho^*_{\Lambda})+E_{|\Lambda|}(\alpha,u',\rho^*_{\Lambda})\right\}
	\end{equation}
	where $S_{|\Lambda|}(\rho^*_{\Lambda})$ given by \eqref{StirlingEq} with the property \eqref{StirlingBound}.
	
	The conclusion follows from Lemma \ref{Lemma 3} which gives us
	\begin{eqnarray}
	K(\mu_0,N^*)\le e^{-|\Lambda|S_{|\Lambda|}(\rho^*_{\Lambda})}\left[\sqrt{2\pi D^{\alpha,+}_{\Lambda,\zero}(\rho^*_{\Lambda})|\Lambda|}\left(1-E_{|\Lambda|}(\alpha,u',\rho^*_{\Lambda})\right)\right]^{-1}
	\end{eqnarray}	
	and
	\begin{equation}
	K(\mu_0,N^*)\ge e^{-|\Lambda|S_{|\Lambda|}(\rho^*_{\Lambda})}\left[\sqrt{2\pi D^{\alpha,+}_{\Lambda,\zero}(\rho^*_{\Lambda})|\Lambda|}\left(1+E_{|\Lambda|}(\alpha,u',\rho^*_{\Lambda})\right)\right]^{-1}.
	\end{equation}
\end{proof}

\begin{remark}
	Note that, similarly to Remark 2.2, we have
	\begin{equation}
	\left|D^{\alpha}_{\Lambda,\zero}(\rho^*_{\Lambda})-\sigma^2_{\Lambda,\zero}(\mu_0)\right|\;\s\;\frac{1}{|\Lambda|^{m(\alpha)(1-\alpha)-1/2}}.
	\end{equation}
	and 
	\begin{equation}
	|D_{\Lambda,\zero}(\rho^*_{\Lambda})-\sigma^2_{\infty}(\mu_0)|\;\s\;\frac{|\partial\Lambda|}{|\Lambda|}.
	\end{equation}
\end{remark}

\begin{proof}[Proof of Corollary \ref{Corollary}]
	The proof follows from the proof of Theorem \ref{Th2} for $\alpha=1/2$.
\end{proof}

\section{Technical Lemmas}
\label{S2}	
In this section we give the technical details for the proofs of the main theorems. We write the canonical finite volume free energy \eqref{FreeE} calculated at $\tN$, as a Taylor expansion around $N^*$ using the free energy defined in \eqref{FreeERL1}. In order to simplify the notation we do not explicit the dependence on the boundary conditions in this first part of the section.

We recall that, using the cluster expansion \eqref{CanCE1}, the free energy \eqref{FreeE} can be written as  
\begin{equation}
f_{\Lambda,\beta}(N)=\frac {1}{\beta}\left\{-\frac{1}{|\Lambda|}\log\frac{|\Lambda|^N}{N!}-F^{(int)}_{\Lambda,\beta}(N)\right\}
\end{equation}
where, using  \eqref{Cluster-Coefficient}, we defined
\begin{equation}
F^{(int)}_{\Lambda,\beta}(N):=\frac{N}{|\Lambda|}\sum_{n\ge 1}\frac{1}{n+1}P_{N,|\Lambda|}B_{\Lambda,\beta}(n).
\end{equation}
Moreover, defining
\begin{equation}
\mathcal{F}^{(int)}_{\Lambda,\beta}(\rho):=\sum_{n\ge1}\frac{1}{n+1}\mathcal{P}_{n+1}(\rho)B_{\Lambda,\beta}(n),
\end{equation}
the free energy defined in \eqref{FreeERL1} can be written as
\begin{equation}
\mathcal{F}_{\Lambda,\beta}(\rho)=\frac{1}{\beta}\left\{\rho(\log\rho-1)-\mathcal{F}^{(int)}_{\Lambda,\beta}(\rho)\right\},
\end{equation}
We also recall that, being $F^{(int)}_{\Lambda,\beta}(N)=\mathcal{F}^{(int)}_{\Lambda,\beta}(\rho_{\Lambda})$ ($N\in\mathbb{N},\;\rho_{\Lambda}=N/|\Lambda|$), between the free energy defined in \eqref{FreeE} and the one defined in \eqref{FreeERL1}, from \eqref{StirlingEq} and \eqref{StirlingBound}, it holds the following relation:
\begin{equation}
\left|f_{\Lambda,\beta}(N)-\mathcal{F}_{\Lambda,\beta}(\rho_{\Lambda})\right|=|S_{|\Lambda|}(\rho_{\Lambda})|\;\s\;\frac{\log{\sqrt{|\Lambda|}}}{|\Lambda|}.
\label{FreeEn-MOD}
\end{equation}

In what follows we will denote with $\f^{int,(m)}_{\Lambda,\beta}(\cdot)$ and $\mathcal{P}^{(m)}_{n+1}(\cdot)$ the $m$-th derivative of
$\f^{int}_{\Lambda,\beta}(\cdot)$ and $\mathcal{P}_{n+1}(\cdot)$.

The following result holds:
\begin{lemma}
	Let  $N$, $N'$ be such that $\rho_{\Lambda},\rho'_{\Lambda}\in(0,1)$ and equation \eqref{CanCE1} is true for both $N$ and $N'$. 
	Then:
	\begin{equation}\begin{split}
	\bigg|f_{\Lambda,\beta}(N)-\left[f_{\Lambda,\beta}(N')+\sum_{m\ge1}\left(\frac{N-N'}{|\Lambda|}\right)^m\frac{\f^{(m)}_{\Lambda,\beta}(\rho'_{\Lambda})}{m!}\right]\bigg|\s \frac{\log\sqrt{|\Lambda|}}{|\Lambda|}.
	\end{split}
	\end{equation}	
	\label{Lemma 1}	
\end{lemma}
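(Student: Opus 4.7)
The plan is to reduce the claim first from $f_{\Lambda,\beta}$ to the cluster-expansion free energy $\mathcal{F}_{\Lambda,\beta}$, and then to establish the Taylor identity for $\mathcal{F}_{\Lambda,\beta}$ as an exact convergent power series. Using \eqref{FreeEn-MOD} at both $N$ and $N'$, each difference $f_{\Lambda,\beta}(\cdot)-\mathcal{F}_{\Lambda,\beta}(\cdot)$ is of order $\log\sqrt{|\Lambda|}/|\Lambda|$, which is already the error allowed in the statement. It therefore suffices to prove the exact identity
\[
\mathcal{F}_{\Lambda,\beta}(\rho_\Lambda)=\mathcal{F}_{\Lambda,\beta}(\rho'_\Lambda)+\sum_{m\ge 1}\left(\frac{N-N'}{|\Lambda|}\right)^m\frac{\mathcal{F}^{(m)}_{\Lambda,\beta}(\rho'_\Lambda)}{m!},
\]
after which translating back to $f_{\Lambda,\beta}$ via \eqref{FreeEn-MOD} and the triangle inequality delivers the claim.

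For this exact expansion I would split $\mathcal{F}_{\Lambda,\beta}$ using \eqref{FreeERL1} into the entropy part $\rho(\log\rho-1)/\beta$ and the interaction part $-\mathcal{F}^{(int)}_{\Lambda,\beta}(\rho)/\beta$, and treat them separately. The entropy is real analytic on $(0,\infty)$ with radius of convergence $\rho'_\Lambda$ at the point $\rho'_\Lambda$, so its Taylor expansion about $\rho'_\Lambda$ converges provided $|\rho_\Lambda-\rho'_\Lambda|<\rho'_\Lambda$, a condition automatically ensured in the cluster-expansion regime considered here. For the interaction series $\mathcal{F}^{(int)}_{\Lambda,\beta}(\rho)=\sum_{n\ge 1}\frac{B_{\Lambda,\beta}(n)}{n+1}\mathcal{P}_{n+1}(\rho)$, each $\mathcal{P}_{n+1}$ is a polynomial of degree $n+1$, and combining the exponential bound \eqref{absCan} with $\mathcal{P}_{n+1}(\rho_\Lambda)=\rho_\Lambda P_{N,|\Lambda|}(n)$ forces both the series and the formal series of its term-by-term derivatives at $\rho'_\Lambda$ to converge absolutely. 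Termwise differentiation is then justified and yields a convergent Taylor expansion of $\mathcal{F}^{(int)}_{\Lambda,\beta}$ about $\rho'_\Lambda$. Summing the entropy and interaction expansions produces the required identity for $\mathcal{F}_{\Lambda,\beta}$.

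The main obstacle I anticipate is verifying absolute convergence of the series of $m$-th derivatives of $\mathcal{F}^{(int)}_{\Lambda,\beta}$ at $\rho'_\Lambda$ uniformly enough in $m$ that the Taylor remainder in $(\rho_\Lambda-\rho'_\Lambda)$ can be sent to zero. Since $\mathcal{P}^{(m)}_{n+1}(\rho'_\Lambda)$ grows only polynomially in $n$ for each fixed $m$, the exponential decay in \eqref{absCan} dominates term-by-term; the slightly more delicate point is controlling the $m$-sum, which reduces to a further application of \eqref{absCan} combined with smallness of $|\rho_\Lambda-\rho'_\Lambda|$ in the regime of interest. The piecewise definition of $\mathcal{P}_{n+1}$ in \eqref{PNL1} only introduces non-smooth points on the discrete set $\{k/|\Lambda|:k\ge 1\}$, which does not obstruct the analysis at a generic $\rho'_\Lambda$ and in particular produces no loss beyond the Stirling error $\log\sqrt{|\Lambda|}/|\Lambda|$ already paid in the first reduction.
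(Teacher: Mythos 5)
Your proposal is correct and follows essentially the same route as the paper: reduce to the cluster-expansion free energy $\mathcal F_{\Lambda,\beta}$ via \eqref{FreeEn-MOD}, split into the entropy and interaction parts, establish an exact convergent Taylor expansion for each, and pay only the Stirling error. The one step you gesture at but do not spell out is the paper's combinatorial bound \eqref{CovClusrDeriv}, which relates $\tfrac{1}{m!}\mathcal P^{(m)}_{n+1}(\rho)$ to $P_{\lfloor\rho|\Lambda|\rfloor,|\Lambda|}(n)$ so that the decay from \eqref{absCan} (which controls $P\cdot B$, not $B$ alone) can actually be transferred to the double series of derivatives — this is the real crux of the absolute-convergence argument you identify as the main obstacle.
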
	
\begin{proof} Let us fix $n\in\mathbb{N}$ and define for all $k\le n$ the set $\{i_1,...,i_k\}_{\ne}:=\{\{i_1,...i_k\}\subset\{1,...,n\}\;|\;i_s\ne i_t\;\forall\;1\le s,t\le k,\;s\ne t\}$. For all $\rho_{\Lambda}$ we have:

	\begin{eqnarray}
	\frac{1}{m!}\p^{(m)}_{n+1}(\rho_{\Lambda}) &= & {{n+1}\choose m}\rho_{\Lambda}^{n+1-m}\times\nonumber
	\\
	& \times & \left[1+\sum_{k=1}^{n+1-m}(-1)^k  \frac{{{n+1}\choose m}^{-1}{{n+1-k}\choose m}}{(\rho_{\Lambda}|\Lambda|)^k}\sum_{\{i_1,...,i_k\}_{\ne}\subseteq\{0,...,n-1\}}\prod_{j=1}^k(n-i_j)\right]\nonumber
	\label{P-forma2}
	\end{eqnarray}
	and
	\begin{equation}\begin{split}
	P_{\lfloor\rho_{\Lambda}|\Lambda|\rfloor,|\Lambda|}(n)=\rho_{\Lambda}^n\left[1+\sum_{k=1}^n(-1)^k\frac{\sum_{\{i_1,...,i_k\}_{\ne}\subseteq\{1,...,n-1\}}\prod_{j=1}^k(n-i_j)}{(\rho_{\Lambda}|\Lambda|)^k}\right].\nonumber
	\end{split}
	\label{P-forma1}
	\end{equation}
	Noting that 
	\begin{equation}\begin{split}
	&\left[1 - \frac{\sum_{i=0}^{n-1}(n-i)}{\rho_{\Lambda}|\Lambda|}\left(2-{{n+1}\choose m}^{-1}{n\choose m}\right)\right]
	\\
	& +\sum_{\substack{k=2\\k\;\mathrm{even}}}^n\left[\frac{\sum_{\{i_1,...,i_k\}_{\ne}\subseteq\{0,...,n-1\}}\prod_{j=1}^k(n-i_j)}{(\rho_{\Lambda}|\Lambda|)^k}\left(2-{{n+1}\choose m}^{-1}{n+1-k\choose m}\right)\right.
	\\
	&\left.-\frac{\sum_{\{i_1,...,i_{k+1}\}_{\ne}\subseteq\{0,...,n-1\}}\prod_{j=1}^{k+1}(n-i_j)}{(\rho_{\Lambda}|\Lambda|)^{k+1}}\left(2-{{n+1}\choose m}^{-1}{n+1-(k+1)\choose m}\right)\right]\ge0\nonumber
	\end{split}
	\end{equation}
	we get
	\begin{equation}
	\left[\frac{1}{m!}\p_{n+1}^{(m)}(\rho_{\Lambda})\right]
	\bigg[P_{\lfloor\rho_{\Lambda}|\Lambda|\rfloor,|\Lambda|}(n)\bigg]^{-1}\le 2{{n+1}\choose m}\rho_{\Lambda}^{1-m}.
	\label{CovClusrDeriv}
	\end{equation}
	Thanks to the previous bound, using \eqref{absCan} and Stirling's formula we obtain
	\begin{equation}\begin{split}
	\bigg|\sum_{m\ge1}\sum_{n\ge m-1}\frac{1}{n+1}\frac{1}{m!}\p_{n+1}^{(m)}(\rho_{\Lambda})B_{\Lambda,\beta}(n)\bigg|
	\le 2\sum_{m\ge1}\rho_{\Lambda}^{1-m}\sum_{n\ge m-1}{{n+1}\choose m}|F_{\beta,N,\Lambda}(n)|
	\\
	\le 2\sum_{m\ge 1}\frac{1}{m!}\rho_{\Lambda}^{1-m}\sum_{n\ge m-1}(n+1)^m e^{-c(n+1)}<\infty.
	\end{split}
	\label{Convergence}
	\end{equation}
	
	From \eqref{PNL} and \eqref{PNL1} it is easy to see by induction that for all $N$ and $N'$, the term $(N/|\Lambda|)P_{N,|\Lambda|}(n)$ can be written as:
	
	\begin{eqnarray}
	\frac{N}{|\Lambda|}P_{N,|\Lambda|}(n)&=&\frac{[(N-N')+N'][(N-N')+(N'-1)]\cdot\cdot\cdot[(N-N')+(N'-n)]}{|\Lambda|^{n+1}}\nonumber
	\\
	&=&\sum_{m=1}^{n+1}\frac{1}{m!}\left(\frac{N-N'}{|\Lambda|}\right)^m\p^{(m)}_{n+1}(\rho'_{\Lambda})+\frac{N'}{|\Lambda|}P_{N',|\Lambda|}(n).
	\label{EXP1}
	\end{eqnarray} 
	Then using \eqref{FreeERL1} and \eqref{EXP1} and thanks to \eqref{Convergence} we have
	\begin{eqnarray}
	\frac{N}{|\Lambda|}\sum_{n\ge1}\frac{1}{n+1}P_{N,|\Lambda|}(n)\Bbl(n)&=&-\beta\left\{\sum_{m\ge1}\left(\frac{N-N'}{|\Lambda|}\right)^m\frac{1}{m!}\f^{int,(m)}_{\Lambda,\beta}(\rho'_{\Lambda})\right\}\nonumber
	\\
	&&+\frac{N'}{|\Lambda|}\sum_{n\ge1}\frac{1}{n+1}P_{N',|\Lambda|}(n)\Bbl(n).
	\label{KeyIdea}
	\end{eqnarray}
	Observing now that the Taylor expansion of  $\rho_{\Lambda}(\log\rho_{\Lambda}-1)$ around $\rho'_{\Lambda}$ is equal to
	\begin{eqnarray}
	\rho_{\Lambda}(\log\rho_{\Lambda}-1)& = &\rho'_{\Lambda}(\log\rho'_{\Lambda}-1)+(\rho_{\Lambda}-\rho'_{\Lambda})\log\rho'_{\Lambda}\nonumber
	\\
	& + & \sum_{m\ge2}(-1)^m\frac{(\rho_{\Lambda}-\rho'_{\Lambda})^m}{m!}\frac{(m-2)!}{(\rho'_{\Lambda})^{m-1}}.
	\label{log}
	\end{eqnarray}
	using \eqref{FreeEn-MOD}, \eqref{KeyIdea} and \eqref{log} we conclude the proof.
\end{proof}

As a consequence of the previous lemma,
for the term $J^C_{\mu}(N,N')$  given in \eqref{NUM-C} we have:
\begin{lemma}
Let $\mu_0\in\R$ be a chemical potential  and $N^*$ which satisfies \eqref{N*} for $\mu=\mu_0$ such that condition $(\star)$ holds. 
	For a generic fluctuation $N$ such that
	\begin{equation}
	N:=N^*+v|\Lambda|^{\alpha}
	\end{equation}
	for some $v\in\R$ and $\alpha\in[1/2,1)$ so that $N\in\mathbb{N}$
	for the quantity $J_{\mu_0}^C(N,N^*)$ defined in \eqref{NUM-C} we have:
	\begin{equation}
	J_{\mu_0}^C(\tN,N^*)\le\exp\left\{-\frac{v^2|\Lambda|^{2\alpha-1}}{2D^{\alpha}_{\Lambda,\zero}(\rho^*_{\Lambda})}+|\Lambda|S_{|\Lambda|}(\rho^*_{\Lambda})+E_{|\Lambda|}(\alpha,v,\rho^*_{\Lambda})\right\}
	\label{SviluppoJ1}
	\end{equation}
	and
	\begin{equation}
	J_{\mu_0}^C(\tN,N^*)\ge\exp\left\{-\frac{v^2|\Lambda|^{2\alpha-1}}{2D^{\alpha}_{\Lambda,\zero}(\rho^*_{\Lambda})}+|\Lambda|S_{|\Lambda|}(\rho^*_{\Lambda})-E_{|\Lambda|}(\alpha,v,\rho^*_{\Lambda})\right\}
	\label{SviluppoJ2}
	\end{equation}
	where, setting
	\begin{equation}
	m(\alpha):=\min\left\{m\in\mathbb{N}\;|\;m(1-\alpha)-1>0\right\},
	\label{m-Coda}
	\end{equation}
	$D^{\alpha}_{\Lambda,\zero}(\rho^*_{\Lambda})$ is defined in \eqref{Var2},
	$S_{|\Lambda|}(\rho^*_{\Lambda})$ is given by \eqref{StirlingEq} with the property \eqref{StirlingBound}, and where $E_{|\Lambda|}(\alpha,v,\rho^*_{\Lambda})$ is an error term of order $|\Lambda|^{-[m(\alpha)(1-\alpha)-1]}$, which will be given in \eqref{Error}. 
	\label{COR1}
\end{lemma}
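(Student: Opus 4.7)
The plan is to start from the closed form \eqref{J} for $J^C_{\mu_0}(N,N^*)$, namely
$$\log J^C_{\mu_0}(N,N^*)=\beta\mu_0(N-N^*)-\beta|\Lambda|\bigl[f_{\Lambda,\beta,\zero}(N)-f_{\Lambda,\beta,\zero}(N^*)\bigr],$$
and then apply Lemma \ref{Lemma 1} with $N'=N^*$, $\rho'_\Lambda=\rho^*_\Lambda$ to replace the finite-volume free energy difference by the infinite Taylor series $\sum_{m\ge 1}\bigl(\tfrac{N-N^*}{|\Lambda|}\bigr)^m\tfrac{\F^{(m)}(\rho^*_\Lambda)}{m!}$, modulo a correction of Stirling type of size $\log\sqrt{|\Lambda|}/|\Lambda|$. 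The key mechanism is the identity \eqref{mu_0}, $\mu_0=\F'(\rho^*_\Lambda)+S'_{|\Lambda|}(\rho^*_\Lambda)$, which exactly cancels the $m=1$ term of the Taylor expansion when combined with $\beta\mu_0(N-N^*)$. This cancellation is precisely what forces the center of the expansion to be $N^*$ rather than $\bar N_\Lambda$ (as highlighted in the heuristic discussion of Section \ref{Sub1}), and it leaves only the small residue $\beta S'_{|\Lambda|}(\rho^*_\Lambda)(N-N^*)\lesssim |\Lambda|^{\alpha-1}$.

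Substituting $N-N^*=v|\Lambda|^\alpha$, each surviving term takes the form $-\beta v^m\F^{(m)}(\rho^*_\Lambda)/(m!\,|\Lambda|^{m(1-\alpha)-1})$. I would split the sum at the threshold $m(\alpha)$ defined in \eqref{m-Coda}: the indices $2\le m\le m(\alpha)-1$ have exponent $m(1-\alpha)-1\le 0$ and, after factoring out $-v^2|\Lambda|^{2\alpha-1}/2$, assemble exactly into the quadratic form $-v^2|\Lambda|^{2\alpha-1}/[2D^\alpha_{\Lambda,\zero}(\rho^*_\Lambda)]$ by comparison with \eqref{Var2}. The complementary tail $m\ge m(\alpha)$ has exponent strictly positive by definition of $m(\alpha)$, so each summand is of order $|\Lambda|^{-[m(1-\alpha)-1]}$ and the dominant one is $|\Lambda|^{-[m(\alpha)(1-\alpha)-1]}$. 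Finally, the Stirling diagonal piece is extracted explicitly as $|\Lambda|S_{|\Lambda|}(\rho^*_\Lambda)$ in the exponent, while its off-diagonal counterpart $|\Lambda|[S_{|\Lambda|}(\rho_\Lambda)-S_{|\Lambda|}(\rho^*_\Lambda)]$ stays inside the error; this separation is what makes the stated bound compatible with the small-order correction.

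The upper and lower bounds then follow by bounding the tail in absolute value and placing $\pm$ in front: this defines $E_{|\Lambda|}(\alpha,v,\rho^*_\Lambda)$ via the absolutely convergent sum
$$E_{|\Lambda|}(\alpha,v,\rho^*_\Lambda)=\sum_{m\ge m(\alpha)}\frac{\beta |v|^m|\F^{(m)}(\rho^*_\Lambda)|}{m!\,|\Lambda|^{m(1-\alpha)-1}}+(\text{subleading corrections from }S'_{|\Lambda|}\text{ and }S_{|\Lambda|}(\rho_\Lambda)-S_{|\Lambda|}(\rho^*_\Lambda)),$$
which is finite thanks to the derivative estimate \eqref{CovClusrDeriv} combined with the uniform cluster-expansion bound \eqref{absCan} under condition $(\star)$, exactly as in the convergence proof \eqref{Convergence} of Lemma \ref{Lemma 1}. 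The main technical obstacle is verifying that \emph{every} auxiliary error --- the Stirling off-diagonal term, the $S'_{|\Lambda|}(\rho^*_\Lambda)(N-N^*)$ residue, and the combinatorial tail of the Taylor series --- fits within the announced order $|\Lambda|^{-[m(\alpha)(1-\alpha)-1]}$; this uses the inequality $\alpha-1\le -[m(\alpha)(1-\alpha)-1]$ for $\alpha\in[1/2,1)$, which follows from the minimality definition of $m(\alpha)$ in \eqref{m-Coda}, and the fact that $m(\alpha)(1-\alpha)-1$ is strictly positive so the geometric decay of successive terms in the tail is secured by \eqref{absCan}.
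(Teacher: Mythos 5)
Your overall strategy matches the paper's exactly: start from the closed form \eqref{J}, Taylor expand $f_{\Lambda,\beta,\zero}$ around $N^*$ via Lemma~\ref{Lemma 1} (equivalently, expand $\F$ around $\rho^*_\Lambda$, which is an exact power series with Stirling remainder), cancel the $m=1$ term by the relation \eqref{mu_0}, split the surviving sum at the threshold $m(\alpha)$ to assemble $2\le m\le m(\alpha)-1$ into $D^{\alpha}_{\Lambda,\zero}(\rho^*_\Lambda)$ as defined in \eqref{Var2}, and collect the tail $m\ge m(\alpha)$ together with the $m=1$ residue into the error $E_{|\Lambda|}(\alpha,v,\rho^*_\Lambda)$. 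This is precisely the route of the paper, including the exponent computation $2\alpha-1-(m-2)(1-\alpha)=m(\alpha-1)+1$ that makes the recombination exact.

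Where I would push back is on your Stirling bookkeeping, which is the only piece of the argument that you invoke with new detail and which, as written, does not add up. You say that the ``diagonal piece'' $|\Lambda|S_{|\Lambda|}(\rho^*_\Lambda)$ is extracted explicitly into the exponent, while the ``off-diagonal counterpart'' $|\Lambda|\bigl[S_{|\Lambda|}(\rho_\Lambda)-S_{|\Lambda|}(\rho^*_\Lambda)\bigr]$ stays inside the error; summing these two contributions gives $|\Lambda|S_{|\Lambda|}(\rho_\Lambda)$, which is not the actual Stirling contribution. The exact identity, via $\beta f_{\Lambda,\beta,\zero}(N)=\beta\F(\rho_\Lambda)+S_{|\Lambda|}(\rho_\Lambda)$ from \eqref{StirlingEq}, is
\begin{equation*}
\log J^C_{\mu_0}(\tN,N^*)=\beta\mu_0(\tN-N^*)-|\Lambda|\beta\sum_{m\ge1}\Bigl(\tfrac{\tN-N^*}{|\Lambda|}\Bigr)^m\tfrac{\F^{(m)}(\rho^*_\Lambda)}{m!}-|\Lambda|\bigl[S_{|\Lambda|}(\tRL)-S_{|\Lambda|}(\rho^*_\Lambda)\bigr],
\end{equation*}
so the Stirling contribution is the \emph{difference} $+|\Lambda|S_{|\Lambda|}(\rho^*_\Lambda)-|\Lambda|S_{|\Lambda|}(\tRL)$, of order $|\Lambda|^{\alpha-1}$, and not the isolated term $+|\Lambda|S_{|\Lambda|}(\rho^*_\Lambda)\approx\log\sqrt{2\pi\rho^*_\Lambda|\Lambda|}$. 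A clean sanity check: at $v=0$ one has $J^C_{\mu_0}(N^*,N^*)=1$, whereas the claimed lower bound $J\ge\exp\{|\Lambda|S_{|\Lambda|}(\rho^*_\Lambda)-E\}\asymp\sqrt{|\Lambda|}$ would fail. To be fair, the paper's own display \eqref{Sviluppo0} carries the same isolated $+|\Lambda|S_{|\Lambda|}(\rho^*_\Lambda)$ and drops its companion $-|\Lambda|S_{|\Lambda|}(\tRL)$; this discrepancy is harmless downstream only because the $e^{\pm|\Lambda|S_{|\Lambda|}(\rho^*_\Lambda)}$ factors cancel between $J$ in this lemma and $K$ in Lemma~\ref{Lemma 3}. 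So your proposal reproduces the paper's conclusion and its method, but the ``diagonal/off-diagonal separation'' you offer as a justification of the $|\Lambda|S_{|\Lambda|}(\rho^*_\Lambda)$ term is not a valid account of where the Stirling contribution actually comes from, and it masks the fact that the two large Stirling terms are compensating partners rather than an extracted term plus a small remainder.
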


\begin{proof}

	From \eqref{J} and  Lemma \ref{Lemma 1}, i.e., doing the Taylor expansion of $f_{\Lambda,\beta,\zero}(\tN)$ around $N^*$ in the sense of Lemma \ref{Lemma 1}, we obtain 
	\begin{eqnarray}
	J^C_{\mu_0}(\tN,N^*) & = & \exp\bigg\{\beta v|\Lambda|^{\alpha}(\mu_0-\F'(\rho^*_{\Lambda}))\nonumber\\
	&& -\beta\sum_{m\ge2}\frac{v^m|\Lambda|^{m(\alpha-1)+1}}{m!}\F^{(m)}(\rho^*_{\Lambda})+|\Lambda|S_{|\Lambda|}(\rho^*_{\Lambda})\bigg\},
	\label{Sviluppo0}
	\end{eqnarray}	
	where now
	from \eqref{mu_0} and \eqref{Stirling1} we have
	\begin{equation}
	|\Lambda|^{\alpha}(\mu_0-\F'(\rho^*_{\Lambda}))\;\s\;\frac{1}{|\Lambda|^{1-\alpha}}.
	\end{equation}
	By the definition of $m(\alpha)$ the dominant terms of the sum in \eqref{Sviluppo0} are given by the ones up to $m(\alpha)-1$ where the largest one is given by $m=2$, so that, defining the error as     
	\begin{equation}\begin{split}
	E_{|\Lambda|}(\alpha,v,\rho^*_{\Lambda}):=\frac{\beta}{|\Lambda|^{m(\alpha)(1-\alpha)-1}}\left|\frac{v^{m(\alpha)}\F^{(m(\alpha))}(\rho^*_{\Lambda})}{m(\alpha)!}+\frac{v(\mu_0-\F'(\rho^*_{\Lambda})) }{|\Lambda|^{1-m(\alpha)(1-\alpha)-\alpha}}\right.
	\\
	\left.+\sum_{m\ge m(\alpha)+1}\frac{v^m\F^{(m)}(\rho^*_{\Lambda})}{m!|\Lambda|^{(m-m(\alpha))(1-\alpha)}}\right|,
	\end{split}
	\label{Error}
	\end{equation}
	we can conclude the proof.
\end{proof}	

Now we investigate the term $K(\mu_0,N^*)$ where $\mu_0$ and $N^*$ are related as in \eqref{N*} and \eqref{mu_0}. 

\begin{lemma} 
	
Let $\mu_0\in\R$ be a chemical potential  and $N^*$ which satisfies \eqref{N*} for $\mu=\mu_0$ such that condition $(\star)$ holds. For $K(\mu_0,N^*)$ defined in \eqref{Def-Normalizzazione}, we have 
	\begin{equation}	
	K(\mu_0,N^*)\ge e^{-|\Lambda|S_{|\Lambda|}(\rho^*_{\Lambda})}\left[\sqrt{2\pi D^{\alpha,+}_{\Lambda,\zero}(\rho^*_{\Lambda})|\Lambda|}\left(1+E_{|\Lambda|}(\alpha,v,\rho_{\Lambda}^*)\right)\right]^{-1}
	\end{equation}
	and
	\begin{equation}	
	K(\mu_0,N^*)\le e^{-|\Lambda|S_{|\Lambda|}(\rho^*_{\Lambda})}\left[\sqrt{2\pi D^{\alpha,+}_{\Lambda,\zero}(\rho^*_{\Lambda})|\Lambda|}\left(1-E_{|\Lambda|}(\alpha,v,\rho_{\Lambda}^*)\right)\right]^{-1}
	\end{equation}
	where $D^{\alpha,+}_{\Lambda,\zero}(\rho^*_{\Lambda})$ is defined in \eqref{Var1}, $S_{|\Lambda|}(\rho^*_{\Lambda})$ is given by \eqref{StirlingEq} with the property \eqref{StirlingBound} and $E_{|\Lambda|}(\alpha,v,\rho_{\Lambda}^*)$ error term of order $|\Lambda|^{-[m(\alpha)(1-\alpha)-1]}$ defined via cluster expansion and given by \eqref{Error}, with $m(\alpha)$ given by \eqref{m-Coda}. 
	\label{Lemma 3}
\end{lemma}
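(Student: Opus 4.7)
The plan is to evaluate $K(\mu_0,N^*)^{-1}=\sum_{N\geq 0}J^C_{\mu_0}(N,N^*)$ (identity \eqref{KwithJ}) by applying Lemma \ref{COR1} termwise and then approximating the resulting discrete Gaussian sum by an integral.

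For each $N\in\mathbb N$ I set $v_N:=(N-N^*)/|\Lambda|^\alpha$ so that $N=N^*+v_N|\Lambda|^\alpha$ fits the hypothesis of Lemma \ref{COR1}. Using $v_N^2|\Lambda|^{2\alpha-1}=(N-N^*)^2/|\Lambda|$, the two-sided bounds \eqref{SviluppoJ1}--\eqref{SviluppoJ2} read
\begin{equation*}
J^C_{\mu_0}(N,N^*)=\exp\!\left\{-\frac{(N-N^*)^2}{2D^{\alpha}_{\Lambda,\zero}(\rho^*_\Lambda)|\Lambda|}+|\Lambda|S_{|\Lambda|}(\rho^*_\Lambda)\right\}\,e^{\pm E_{|\Lambda|}(\alpha,v_N,\rho^*_\Lambda)}.
\end{equation*}
I then swap $D^\alpha$ for $D^{\alpha,+}$ in the Gaussian exponent: the difference $1/D^{\alpha,+}-1/D^\alpha$ is the finite sum $\sum_{m=3}^{m(\alpha)-1}2\beta v_N^{m-2}(|\F^{(m)}(\rho^*_\Lambda)|-\F^{(m)}(\rho^*_\Lambda))/(m!|\Lambda|^{(m-2)(1-\alpha)})$, and multiplying it by $(N-N^*)^2/(2|\Lambda|)=v_N^2|\Lambda|^{2\alpha-1}/2$ produces a quantity of exactly the same order in $|\Lambda|$ as the terms already contained in $E_{|\Lambda|}(\alpha,v_N,\rho^*_\Lambda)$ and is therefore absorbable there. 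The gain from this substitution is that the quadratic coefficient $1/D^{\alpha,+}$ is manifestly positive and bounded away from $0$, so the sum below is genuinely Gaussian.

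With $k=N-N^*$, the sum then reduces to $\sum_{k\geq -N^*}\exp\{-k^2/(2D^{\alpha,+}_{\Lambda,\zero}(\rho^*_\Lambda)|\Lambda|)\}$. An Euler--Maclaurin (or Poisson summation) comparison gives
\begin{equation*}
\sum_{k\geq -N^*}\exp\!\left\{-\frac{k^2}{2D^{\alpha,+}_{\Lambda,\zero}(\rho^*_\Lambda)|\Lambda|}\right\}=\sqrt{2\pi D^{\alpha,+}_{\Lambda,\zero}(\rho^*_\Lambda)|\Lambda|}\bigl(1+O(|\Lambda|^{-[m(\alpha)(1-\alpha)-1]})\bigr),
\end{equation*}
the missing lower tail $k<-N^*$ contributing only an exponentially small correction since $N^*\asymp|\Lambda|$ by \eqref{lim}. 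Multiplying through by $e^{|\Lambda|S_{|\Lambda|}(\rho^*_\Lambda)}$ and inverting then produces the two-sided bounds on $K(\mu_0,N^*)$ claimed, the Euler--Maclaurin error being of the same order as $E_{|\Lambda|}$.

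The chief technical obstacle is the tail of the sum over $N$: when $|N-N^*|$ is so large that $v_N$ ceases to be $o(1)$, the cluster expansion \eqref{CanCE1} may no longer hold and Lemma \ref{COR1} cannot be invoked as stated. In that regime I would argue directly from the Legendre--transform characterization \eqref{N*} of $N^*$ (so that $N\mapsto e^{\beta\mu_0 N}Z^{\mathbf 0}_{\Lambda,\beta}(N)$ is maximized at $N^*$), combined with the stability bound \eqref{assumption1}, to obtain a decay $J^C_{\mu_0}(N,N^*)\leq e^{-c(N-N^*)^2/|\Lambda|}$ uniformly in $N$; the contribution of this regime to the sum is then exponentially smaller than the bulk Gaussian mass $\sqrt{2\pi D^{\alpha,+}_{\Lambda,\zero}(\rho^*_\Lambda)|\Lambda|}$ and is absorbed into the error $E_{|\Lambda|}$.
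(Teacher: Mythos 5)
Your skeleton is the same as the paper's (write $K^{-1}=\sum_N J^C_{\mu_0}(N,N^*)$, Taylor-expand the free energy in the bulk, control the rest of the sum), but there are two genuine gaps and one sign-chasing error.

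First, Lemma \ref{COR1} cannot be invoked termwise over all $N$ with $v_N=(N-N^*)/|\Lambda|^\alpha$ growing. The error $E_{|\Lambda|}(\alpha,v_N,\rho^*_\Lambda)$ defined in \eqref{Error} carries factors $v_N^{m(\alpha)}$ (and higher), so it is not $o(1)$ once $|N-N^*|\gtrsim|\Lambda|^\alpha$; the bound is then useless precisely in the range where the Gaussian sum still has non-trivial mass. The paper instead confines the Gaussian computation to the window $I_{\alpha,v}$ of \eqref{set1} (so that $|v_N|\leq v$ is uniformly bounded) and evaluates $E$ and the variance at the endpoint, then introduces an intermediate annulus $I^{\alpha,v}_{\delta,v'}$ -- with a case distinction on whether $\alpha>(2d-1)/2d$ -- and treats the remaining mass by a separate decay estimate. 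Your plan has no mechanism for the intermediate annulus.

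Second, the tail decay $J^C_{\mu_0}(N,N^*)\leq e^{-c(N-N^*)^2/|\Lambda|}$ cannot be extracted from the Legendre--transform characterization of $N^*$ plus stability. Maximality of $N^*$ only gives $J^C_{\mu_0}(N,N^*)\leq 1$, and stability only bounds $\canBC(N)$ from above without producing any curvature. What actually makes the quadratic decay work in \eqref{GrandiCode1}--\eqref{CodeGrandiForma1} is the strict convexity of the infinite-volume free energy $f_\beta$ together with the finite-volume correction estimate $|f_{\Lambda,\beta,\zero}-f_\beta|\lesssim|\partial\Lambda|/|\Lambda|$ from \cite{pulvirenti2015finite}: the exponent is rewritten as in \eqref{Jexp} and the convex quadratic term dominates the $O(|\partial\Lambda|)$ corrections because $(N-N^*)^2/|\Lambda|\gg|\partial\Lambda|$ in those intervals. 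Your proposed replacement argument is missing this key input.

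Third, and more minor, the direct ``swap $D^\alpha\to D^{\alpha,+}$ in the exponent'' goes in the wrong direction for the upper bound: since $1/D^{\alpha,+}\geq 1/D^\alpha$, the substitution decreases each summand and gives only a lower bound on $K^{-1}$. Moreover the additive exponent correction $(1/D^{\alpha,+}-1/D^{\alpha})(N-N^*)^2/(2|\Lambda|)$ is of size $\sum_{m=3}^{m(\alpha)-1}v_N^{m}|\Lambda|^{-(m(1-\alpha)-1)}$ with non-positive exponents $m(1-\alpha)-1\leq0$, which is not $O(E_{|\Lambda|})$ as you claim. The paper sidesteps both problems by introducing a third quantity $D^{\alpha,-}$ (positive by Lemma \ref{LemmaA1}) for the upper-bound direction, and only afterwards converting the prefactor $\sqrt{D^{\alpha,-}|\Lambda|}$ to $\sqrt{D^{\alpha,+}|\Lambda|}(1+E)$ as a multiplicative, not additive, correction.
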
	

\begin{proof}
	Let us define 
	\begin{equation}\label{set1}
	I_{\alpha,v}:=\mathbb{N}\cap[N^*-v|\Lambda|^{\alpha},N^*+v|\Lambda|^{\alpha}],
	\end{equation}
	with $v\in\R^+$ and $\alpha\in[1/2,1).$
	
	Let also consider the following sets:
	\begin{equation}
	I^c_{\alpha,v}:=\mathbb{N}\setminus I_{\alpha,v}
	\end{equation}
	if $\alpha>(2d-1)/2d$, or
	\begin{equation}\label{set2}
	I^{\alpha,v}_{\delta,v'}:=\mathbb{N}\cap\left\{[N^*-v'|\Lambda|^{\delta},N^*+v'|\Lambda|^{\delta}]\setminus I_{\alpha,v}\right\}
	\end{equation} 
	and
	\begin{equation}\label{set3}
	I^c_{\delta,v'}:=\mathbb{N}\setminus I^{\alpha,v}_{\delta,v'}
	\end{equation}
	if $\alpha\le (2d-1)/2d$ for some $\delta>(2d-1)/2d$ and $v'\in\R^+$.
	
	For all $N\in I^c_{\delta,v'}$ or $N\in I^c_{\alpha,v}$, there exists a positive constant $c_1$ independent on $|\Lambda|$ such that
	\begin{equation}
	J^C_{\mu_0}(N,N^*)=\frac{e^{\beta\mu_0 N}Z^{\zero}_{\beta,\Lambda}(N)}{e^{\beta\mu_0 N^*}Z^{\zero}_{\beta,\Lambda}(N^*)}\le\exp\left\{-c_1\frac{(N-N^*)^2}{|\Lambda|}\right\}.
	\label{GrandiCode1}
	\end{equation}
	Since in these intervals $N$ and $N^*$ are not close we compare with the infinite volume free energy. Hence, adding and subtracting $f_\beta(\rho_\Lambda)$, the exponent of $J^C_{\mu_0}(N,N^*)$ written as in \eqref{J}, becomes:
	\begin{eqnarray}\label{SviluppoCodeGrandiForma1}
	&&-|\Lambda|[f_{\Lambda,\beta,\zero}(N)-f_{\beta}(\rho_{\Lambda})]+|\Lambda|[f_{\Lambda,\beta,\zero}(N^*)-f_{\beta}(\rho^*_{\Lambda})]\nonumber
	\\
	&&-|\Lambda|(\rho_{\Lambda}-\rho^*_{\Lambda})(\mu_0-f'_{\beta}(\rho^*_{\Lambda}))-f''_{\beta}(\hat{\rho}_{\Lambda})\frac{(N-N^*)^2}{2|\Lambda|}
	\label{Jexp}
	\end{eqnarray}
	where we also performed a Taylor expansion of  $f_\beta(\rho_\Lambda)$ around $\rho^*_{\Lambda}$ and where $\hat{\rho}_{\Lambda}\in\left(\min\left\{\frac{N}{|\Lambda|},\rho_0\right\},\max\left\{\frac{N}{|\Lambda|},\rho_0\right\}\right)$. 
	
	From \cite{pulvirenti2015finite} we have that the first two terms of \eqref{Jexp} are of order $|\partial\Lambda|$. For the third term, on one hand form \eqref{MU} and \eqref{fbeta}-\eqref{A.17} we find $\mu_0-f'_{\beta}(\rho^*_{\Lambda})\le|\partial\Lambda|/|\Lambda|$. On the other hand, as it is clarify in \eqref{N-max}, $(\rho_{\Lambda}-\rho^*_{\Lambda})/|\Lambda|\rightarrow 0$ as $|\Lambda|\rightarrow\infty$ for all $N$ here considered. Then also the second term of \eqref{Jexp} has order $|\partial\Lambda|$.
	Moreover we have
	\begin{equation}
	\frac{(N-\bN)^2}{|\Lambda|}\ge|\Lambda|^{2\gamma-1}>|\partial\Lambda|\;\;\;(\gamma=\alpha,\delta)
	\end{equation}
	and
	\begin{equation}
	0<c_1\le\beta f''_{\beta}(\tilde{\rho}_{\Lambda})+C_1\frac{|\partial\Lambda|}{|\Lambda|^{2\gamma-1}}<+\infty\;\;\;(\gamma=\alpha,\delta),
	\label{convexity}
	\end{equation}
	thanks to the fact that we are far from the transition phase, for some $C_1,\;c_1\in\R^+$.
	
	Thus, there exists $c>0$ independents on $|\Lambda|$ such that 
	
	\begin{equation}\begin{split}
	\sum_{N\in I_{\delta,v'}^c(I^c_{\alpha,v})}\frac{e^{\beta\mu^* N}\canBC(N)}{e^{\beta\mu^*N^*}\canBC(N^*)} 
	\le \sum_{N\in I_{\delta}^c(I^c_{\alpha,v})}e^{-c[(N-N^*)^2/|\Lambda|]}\;\s\;  e^{-c|\partial\Lambda|},
	\end{split}
	\label{CodeGrandiForma1}  
	\end{equation}
	
	Let us consider now the sum over $I_{\alpha,v}$. Then, defining   
	\begin{equation}
	D^{\alpha,-}_{\Lambda,\zero}(\rho^*_{\Lambda}):=\left[\beta\F''(\rho^*_{\Lambda})-\beta\sum_{m\ge3}^{m(\alpha)-1}\frac{2v^{m-2}|\F^{(m)}(\rho^*_{\Lambda})|}{m!|\Lambda|^{(m-2)(1-\alpha)}}\right]^{-1},
	\end{equation}
	which is positive thanks to Lemma \ref{LemmaA1} and the fact that $f''_{\beta}(\rho_0)>0$,  and using Lemma \ref{COR1} we have:
	\begin{equation}\begin{split}
	&\sum_{N\in I_{\alpha,v}}\frac{e^{\beta\mu_0 N}\canBC(N)}{e^{\beta\mu_0 N^*}\canBC(N^*)}
	\\
	&\le\exp\left\{E_{|\Lambda|}(\alpha,v,\rho_{\Lambda}^*)+|\Lambda|S_{|\Lambda|}(\rho^*_{\Lambda})\right\}\sum_{n=-v|\Lambda|^{\alpha}}^{v|\Lambda|^{\alpha}}\exp\left\{-\frac{n^2}{2|\Lambda|D^{\alpha,-}_{\Lambda,\zero}(\rho^*_{\Lambda})}\right\}
	\\
	&\le\exp\left\{E_{|\Lambda|}(\alpha,v,\rho_{\Lambda}^*)+|\Lambda|S_{|\Lambda|}(\rho^*_{\Lambda})\right\}\left[\sqrt{2\pi D^{\alpha,-}_{\Lambda,\zero}(\rho^*_{\Lambda})|\Lambda|}+Ce^{-|\Lambda|^{2\alpha-1}}\right]
	\\
	&\le\exp\left\{|\Lambda|S_{|\Lambda|}(\rho^*_{\Lambda})\right\}\left[\sqrt{2\pi D^{\alpha,+}_{\Lambda,\zero}(\rho^*_{\Lambda})|\Lambda|}\left(1+E_{|\Lambda|}(\alpha,v,\rho_{\Lambda}^*)\right)\right]
	\end{split}
	\label{Sopra}
	\end{equation}
	with $C\in\R^+$ and where in the last but one inequality we used equation (iii) of Theorem 1.1 in \cite{bringmann2017asymptotic} and the fact that  $E_{|\Lambda|}(\alpha,v,\rho_{\Lambda}^*)=O(|\Lambda|^{-[m(\alpha)(1-\alpha)-1]})$. In the same way we obtain 
	\begin{equation}
	\begin{split}
	\sum_{N\in I_{\alpha,v}}\frac{e^{\beta\mu_0 N}\canBC(N)}{e^{\beta\mu_0 N^*}\canBC(N^*)}\ge e^{|\Lambda|S_{|\Lambda|}(\rho^*_{\Lambda})}\left[\sqrt{2\pi D^{\alpha,+}_{\Lambda,\zero}(\rho^*_{\Lambda})|\Lambda|}\left(1-E_{|\Lambda|}(\alpha,v,\rho_{\Lambda}^*)\right)\right]
	\end{split}
	\label{Sotto}
	\end{equation}
	which conclude the proof if $\alpha>(2d-1)/2d$. 
	
	Otherwise for $N\in I^{\alpha,v}_{\delta,v'}$, thanks to Lemma \ref{COR1} we have
	\begin{eqnarray}
	\sum_{N\in I^{\alpha,v}_{\delta,v'}}\frac{e^{\beta\mu_0 N}\canBC(N)}{e^{\beta\mu_0 N^*}\canBC(N^*)} & \s & e^{E_{|\Lambda|}(\delta,v,'\rho^*_{\Lambda})+|\Lambda|S_{|\Lambda|}(\rho^*_{\Lambda})}\sum_{N\in I^{\alpha,v}_{\delta,v'}}e^{-c \frac{(N-N^*)^2}{|\Lambda|}}\nonumber\\
	& \s & \exp\left\{-c v^2|\Lambda|^{2\alpha-1}\right\}
	\label{CodeIntermedie}
	\end{eqnarray}
	with $c\in\R^+$ independents on $|\Lambda|$.
\end{proof}

\begin{remark}	\label{RemPre}
	As we wrote in \eqref{lim} it is easy to see that $N^*/|\Lambda|,\;\bN/|\Lambda$ have as a limit $\rho_0$ when $|\Lambda|\rightarrow\infty$.
	Indeed from \eqref{GrandCan0}, \eqref{FreeE} and \eqref{FVP} we have
	\begin{equation}
	\beta p_{\Lambda,\beta,\zero}(\mu_0)
	=\frac{1}{|\Lambda|}\log\sum_{N\ge0}e^{\beta\mu_0 N}\canBC(N)
	\ge\frac{1}{|\Lambda|}\log\left[e^{\beta\mu_0 N^*}\canBC(N)\right]
	\end{equation} 
	On the other hand, let $N_{max}$ be the maximal number of particles allowed by the system, i.e., the first $N\in\mathbb{N}$ such that
	\begin{equation}
	e^{\beta\mu_0 N}\canBC(N)\le\frac{[e^{\beta(\mu_0+B)}|\Lambda|]^N}{N!}\le\left[\frac{\exp\left\{\beta(\mu_0+B)+\left(1-\frac{N}{12N+1}\right)\right\}|\Lambda|}{N}\right]^N<1.
	\label{N-max}
	\end{equation}
	Then we get
	\begin{eqnarray*}
		\beta p_{\Lambda,\beta,\zero}(\mu_0) & \leq &
		\frac{1}{|\Lambda|}\log\left[\left(e^{\beta\mu_0 N^*}\canBC(N^*)\right)\sum_{N=0}^{N^{max}}1+C\frac{[e^{\beta(\mu_0+B)}|\Lambda|]^N_{max}}{N_{max}!}\right]\nonumber
		\\
		& \leq & 
		\beta\mu_0\rho^*_{\Lambda}-\beta f_{\Lambda,\beta,\zero}(N^*)
		+\frac{1}{|\Lambda|}\log\left[\sum_{N=0}^{N_{max}}1+C\frac{[e^{\beta(\mu_0+B)}|\Lambda|]^N_{max}}{N_{max}!}\right],
	\end{eqnarray*}
	for some $C\in\R^+$.
\end{remark}

Now we can study the relation between $\bN$ and $N^*$. 
\begin{lemma}
	Let $\bN$ as in \eqref{MeanValue}  and $N^*$  which satisfies \eqref{N*} for $\mu=\mu_0$, such that condition $(\star)$ holds for both of them.
	
	We have:
	\begin{equation}
	\bN>N^*
	\label{1}
	\end{equation}	
	and
	\begin{equation}
	\bN-N^*\le C
	\label{2}
	\end{equation}
	for some $C>0$ which does not depend on $\Lambda$.
	\label{LemmaBN}
\end{lemma}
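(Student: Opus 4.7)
The plan is to analyze the discrete grand-canonical weight $p_N := e^{\beta\mu_0 N}\canBC(N)/\GcanBC(\mu_0)$, whose mode is precisely $N^*$ by \eqref{N*} and whose mean is $\bRL|\Lambda|$. Both inequalities \eqref{1} and \eqref{2} reduce to controlling the mean--mode shift $\bRL|\Lambda|-N^*=\sum_{N\geq 0}(N-N^*)p_N$: one must show it is bounded from above by a constant uniform in $\Lambda$ (which is \eqref{2}) and that it is strictly positive (which is \eqref{1}).

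I would first localize the sum to a polynomial neighborhood of $N^*$. Outside a window $|N-N^*|\lesssim|\Lambda|^{1/2+\varepsilon}$, the uniform convexity \eqref{convexity} of the free energy (together with the comparison with the infinite-volume functional from \cite{pulvirenti2015finite}) forces Gaussian decay of $p_N$, producing a contribution that is exponentially small in $|\Lambda|$, exactly as in the tail estimate \eqref{CodeGrandiForma1} used in the proof of Lemma \ref{Lemma 3}. Inside the window I would apply Lemma \ref{Lemma 1} at $N'=N^*$ together with the identity \eqref{mu_0} to obtain the expansion
\begin{equation*}
\log\frac{p_N}{p_{N^*}} = -\frac{\beta\F''(\rho^*_{\Lambda})}{2|\Lambda|}(N-N^*)^2 - \frac{\beta\F'''(\rho^*_{\Lambda})}{6|\Lambda|^2}(N-N^*)^3 + R_\Lambda(N),
\end{equation*}
where the linear term in $(N-N^*)$ vanishes by \eqref{mu_0} up to the Stirling remainder $S'_{|\Lambda|}(\rho^*_\Lambda)\lesssim 1/|\Lambda|$ (absorbed in $R_\Lambda$), and $R_\Lambda$ collects the $\F^{(m)}$-contributions with $m\geq 4$ and the Stirling error of size $\log|\Lambda|/|\Lambda|$. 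Passing from the discrete sum to a Gaussian integral with cubic perturbation (as in the proof of Lemma \ref{Lemma 3}, invoking \cite{bringmann2017asymptotic} to control the discretization) I would then obtain the asymptotic identity
\begin{equation*}
\bRL|\Lambda|-N^* = -\frac{\F'''(\rho^*_{\Lambda})}{2\beta[\F''(\rho^*_{\Lambda})]^{2}} + o(1).
\end{equation*}

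The bound \eqref{2} follows immediately: $\F''(\rho^*_{\Lambda})$ is uniformly bounded below by \eqref{convexity}, while $\F'''(\rho^*_{\Lambda})$ is bounded by differentiating the absolutely convergent series in \eqref{FreeERL1} and invoking \eqref{absCan}. The positivity \eqref{1} will follow from the fact that in the low-density regime of Condition~$(\star)$ the free energy $\F$ is dominated by the ideal-gas contribution $\rho(\log\rho-1)/\beta$, whose third derivative equals $-1/(\beta\rho^2)<0$, the cluster-expansion correction being of smaller order; hence the leading shift $-\F'''/(2\beta\F''^{2})$ is strictly positive, forcing $\bRL|\Lambda|>N^*$ and therefore $\bN>N^*$ for $|\Lambda|$ large. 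The main obstacle I anticipate is precisely this last step: since the target shift is only of order one while the subleading contributions ($R_\Lambda$, Stirling remainders, the error from replacing the discrete sum by a Gaussian integral) are individually only $o(1)$, one must control their cumulative effect carefully to guarantee that the sign of the $O(1)$ leading term is preserved.
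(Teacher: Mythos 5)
Your strategy for \eqref{2} is essentially the paper's: you decompose the sum defining the mean--mode shift into a central window of width $|\Lambda|^{1/2+\varepsilon}$ and exponentially small tails, then use a Gaussian approximation with a small cubic perturbation in the window, exactly as in the paper's proof of Lemma~\ref{Lemma 3} (the odd-function cancellation argument). All contributions are then $O(1)$, so \eqref{2} follows. That part is sound.

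The argument for \eqref{1}, however, has a genuine gap. Your asymptotic identity
\begin{equation*}
\bRL|\Lambda|-N^* = -\frac{\F'''(\rho^*_{\Lambda})}{2\beta[\F''(\rho^*_{\Lambda})]^{2}} + o(1)
\end{equation*}
is not correct, because the linear term that you push into $R_\Lambda$ contributes at order $O(1)$, not $o(1)$, to the mean--mode shift. After using \eqref{mu_0} a residual linear term $\ell\,(N-N^*)$ survives with $\ell=O(1/|\Lambda|)$ (from the Stirling subtraction and, more importantly, from the fact that $N^*$ is a \emph{discrete} maximizer, so the stationarity condition holds only up to a spacing of size $1/|\Lambda|$). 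A linear perturbation of a Gaussian of variance $\sigma^2\sim|\Lambda|/(\beta\F'')$ shifts its mean by $\ell\sigma^2=O(1)$ --- the same order as the cubic $\F'''$ term, and with indeterminate sign. The ideal gas makes this concrete: the grand-canonical law is Poisson with parameter $\lambda=e^{\beta\mu_0}|\Lambda|$, so $\bRL|\Lambda|-N^*=\{\lambda\}$ (the fractional part of $\lambda$), which sweeps $[0,1)$ as $\mu_0$ varies. Your formula, on the other hand, would predict the constant value $-\F'''/(2\beta\F''^2)=1/2$ for the ideal gas, so the missing linear contribution is $\{\lambda\}-1/2$, which can be as negative as $-1/2$. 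In particular the sign argument based on $\F'''<0$ cannot by itself establish \eqref{1}: in the Poisson case one has $\bN=\lfloor\lambda\rfloor=N^*$, so even the conclusion is at the edge. (For what it is worth, the paper's own proof of \eqref{1} is extremely terse --- it asserts that positivity follows ``immediately'' from the tautological identity \eqref{FONDAMENTALE} --- and does not supply the sign argument you are missing either; the paper's explicit work is all devoted to \eqref{2}.) To close the gap you would need to track the linear coefficient $\ell$ explicitly and show that its $O(1)$ contribution to the mean shift, combined with the cubic one, is bounded below by $1$; this is a genuinely delicate point that the leading-order computation you sketch does not settle.
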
	

\begin{proof}
	From \eqref{MeanValue}, adding and subtracting $N^*/|\Lambda|$ and multiplying and dividing by  $e^{\beta\mu_0N^*}\canBC(N^*)$,  we have:
	\begin{eqnarray}
	\bRL&=&\frac{\sum_{N\ge0}[(N\pm N^*)/|\Lambda|] e^{\beta\mu_0 N}\canBC(N)}{\GcanBC(\mu_0)}\nonumber
	\\
	&=&\frac{N^*}{|\Lambda|}+\frac{\sum_{N\ge0}[(N- N^*)/|\Lambda|] e^{\beta\mu_0 N}\canBC(N)}{\GcanBC(\mu_0)}\nonumber
	\\
	&=&\frac{N^*}{|\Lambda|}+\left[\sum_{N\ge0}\left(\frac{N- N^*}{|\Lambda|}\right) \frac{e^{\beta\mu_0N}\canBC(N)}{e^{\beta\mu_0N^*}\canBC(N^*)}\right]K(\mu_0,N^*)
	\label{FONDAMENTALE}
	\end{eqnarray}	
	which implies immediately \eqref{1}.
	
	The proof of \eqref{2} follows the strategy of the one of Lemma \ref{Lemma 3} with $\alpha=1/2$, such that we will use the sets defined in \eqref{set1}, \eqref{set2} and \eqref{set3}.
	
	Then from an equivalent study to the one done in \eqref{GrandiCode1}-\eqref{CodeGrandiForma1} we obtain that there exists $c\in\R^+$ such that 
	\begin{equation}
	\sum_{N\in I^{c}_{\delta,v'}}\left(\frac{N-N^*}{|\Lambda|}\right)e^{\beta\mu_0 (N-N^*)}\frac{\canBC(N)}{\canBC(N^*)}\;\s\;e^{-c|\partial\Lambda|}
	\end{equation}
	because for all $N\in I^{c}_{\delta,v'}$ we have
	\begin{equation}
	\frac{N-N^*}{|\Lambda|}\in\mathbb{Z}\cap\left\{\left[-\rho^*_{\Lambda},\frac{-v'}{|\Lambda|^{1-\delta}}\right]\cup\left[\frac{v'}{|\Lambda|^{1-\delta}},C(\beta,\mu_0,B)\right]\right\}
	\end{equation}
	where $C(\beta,\mu_0,B)$ is a constant which does not depend on $|\Lambda|$ and which can be derived from \eqref{N-max}.   
	
	For $N\in I^{1/2,v}_{\delta,v'}$ choosing $\delta$ such that $1-\delta=m(\delta)(1-\delta)-1$, i.e. $m(\delta)=1+1/(1-\delta)$, we get 
	\begin{eqnarray}
	&&\sum_{N\in I^{1/2,v}_{\delta,v'}}\left(\frac{N-N^*}{|\Lambda|}\right)e^{\beta\mu_0 (N-N^*)}\frac{\canBC(N)}{\canBC(N^*)}\s e^{|\Lambda|S_{|\Lambda|}(\rho^*_{\Lambda})}\times\nonumber
	\\
	&&\;\;\times\;\sum_{N\in I^{1/2,v}_{\delta,v'}}\left(\frac{N-N^*}{|\Lambda|}\right)e^{-\frac{(N-N^*)^2}{c|\Lambda|}+c_1\frac{N-N^*}{|\Lambda|}}\s e^{|\Lambda|S_{|\Lambda|}(\rho^*_{\Lambda})}\times\nonumber
	\\
	&&\times\sum_{N\in I^{1/2,v}_{\delta,v'}}\left(\frac{N-N^*}{|\Lambda|}\right)e^{-\frac{(N-N^*)^2}{c|\Lambda|}}\left(1+c_1\frac{N-N^*}{|\Lambda|}\right)\s e^{|\Lambda|S_{|\Lambda|}(\rho^*_{\Lambda})}\times\nonumber
	\\
	&&\;\;\times\sum_{N\in I^{1/2,v}_{\delta,v'}}\left(\frac{N-N^*}{|\Lambda|}\right)^2\;e^{-c\frac{(N-N^*)^2}{|\Lambda|}}\s \frac{e^{|\Lambda|S_{|\Lambda|}(\rho^*_{\Lambda})}}{|\Lambda|}
	\end{eqnarray}
	where $c,\;c_1\in\R^+$ independent on $|\Lambda|$ and because for all $N\in I^{1/2,v}_{\delta,v'}$ we have
	\begin{equation}
	\frac{(v')^2e^{-\frac{(v')^2|\Lambda|^{2\delta-1}}{c}}}{|\Lambda|^{2(1-\delta)}}\le\left(\frac{N-N^*}{|\Lambda|}\right)^2e^{-\frac{(N-N^*)^2}{c|\Lambda|}}\le\frac{v^2e^{-\frac{v^2}{c}}}{|\Lambda|}
	\end{equation}
	and  $[(N-N^*)/|\Lambda|]e^{-\frac{(N-N^*)^2}{c|\Lambda|}}$ is an odd function.
	
	Finally, using an estimate similar to the previous one we obtain 
	\begin{eqnarray}
	&&\sum_{N\in I_{1/2}}\left(\frac{N- N^*}{|\Lambda|}\right) e^{\beta\mu_0 (N-N^*)}\frac{\canBC(N)}{\canBC(N^*)}\s e^{|\Lambda|S_{|\Lambda|}(\rho^*_{\Lambda})}\nonumber
	\\
	&&\;\times\sum_{N\in I_{1/2}}\left(\frac{N-N^*}{|\Lambda|}\right)^2e^{-\frac{(N-N^*)^2}{2|\Lambda|D_{\Lambda,\beta,\e}}(\rho^*_{\Lambda})}\s e^{|\Lambda|S_{|\Lambda|}(\rho^*_{\Lambda})}\frac{\sqrt{|\Lambda|}}{|\Lambda|}.
	\end{eqnarray}
	
	The conclusions  follow from the fact that, thanks to Lemma \ref{Lemma 3} we have
	\begin{equation}
	K(\mu_0,N^*)\s\; \exp\left\{-|\Lambda|S_{|\Lambda|}(\rho^*_{\Lambda})\right\}\left(\sqrt{|\Lambda|}\right)^{-1}+\frac{1}{|\Lambda|}.
	\end{equation}
\end{proof}

\begin{remark}  	
	Note that \eqref{1} implies $A_{\bN}\subseteq A_{N^*}$ and then 	
	\begin{equation}
	\probBC(A_{N^*})\ge\probBC(A_{\bN}).
	\end{equation}
	
	On the other hand if we consider periodic boundary condition, thanks to the fact that from \cite{pulvirenti2015finite} and Lemma \ref{Lemma 1} we have
	\begin{equation}
	\left|\mu_0-\mathcal{F}'_{\Lambda,\beta,per}(\bRL)\right|\;\s\; \frac{1}{|\Lambda|},
	\end{equation}
	we can choose both $\bN$ and $N^*$ as center of deviations which implies  
	\begin{equation}
	\mathbb{P}^{per}_{\Lambda,\mu_0}(A_{\bar N})\sim\mathbb{P}^{per}_{\Lambda,\mu_0}(A_{N^*}).
	\end{equation}
	Here, $\mathbb{P}^{per}_{\Lambda,\mu_0}$ is the grand-canonical probability measure with periodic boundary conditions defined similarly to \eqref{GcProbE} where, instead of $H_{\Lambda}^{\zero}(\mathbf{q})$ (given by \eqref{Hamiltonian0}) we consider $H^{per}_{\Lambda}(\mathbf{q})$ defined using a proper \textquotedblleft periodic\textquotedblright stable and regular pair potential $V^{per}(x_i-x_j)$ (see for example definition (3) in \cite{pulvirenti2012cluster}).
	
\end{remark}

\appendix

\section{Canonical cluster expansion}
\label{appendice1}

Here we give some details about the cluster expansion needed for the proof of \eqref{SigmaINF}.
We follow the results from \cite{pulvirenti2012cluster} to which we refer for a more detailed description.

An \textit{abstract polymer model} $(\mathcal{V},\mathbb{G}_{\mathcal{V}},\omega)$ consists of (i) a set of polymers $\mathcal{V}:=\{V_1,...,V_{|\mathcal{V}|}\}$, (ii) a binary symmetric relation $\sim$ of compatibility on $\mathcal{V}\times\mathcal{V}$ such that for all $i\ne j$, $V_i\sim V_j$ if and only if $V_i\cap V_j=\emptyset$, (iii) a graph $\mathbb{G}_{\mathcal{V}}\equiv(V(\mathbb{G}_{\mathcal{V}}),E(\mathbb{G}_{\mathcal{V}}))$ such that the vertex set $V(\mathbb{G}_{\mathcal{V}})=\mathcal{V}$ and an edge $\{i,j\}\in E(\mathbb{G}_{\mathcal{V}})$ if and only if $V_i\not\sim V_j$ and (iv) a weight function $\omega:\mathcal{V}\rightarrow\mathbb{C}$. Then defining for all $N\in\mathbb{N}$
\begin{equation}
\mathcal{V}\equiv\mathcal{V}(N):=\{V\;:\;V\subset\{1,...,N\},\;|V|\ge2\}
\end{equation} 
and 
\begin{equation}
\omega\equiv\omega_{\Lambda}(V):=\sum_{g\in\mathcal{C}_V}\int_{\Lambda^{|g|}}\prod_{i\in V(g)}\frac{dq_i}{|\Lambda|}\prod_{\{i,j\}\in E(g)}f_{i,j}   
\end{equation}
with $f_{i,j}:=e^{-\beta V(q_i,q_j)}-1$, we have that thanks to \cite{pulvirenti2012cluster} 
\begin{equation}
\frac{N}{|\Lambda|}\sum_{n\ge1}\frac{1}{n+1}P_{N,|\Lambda|}(n)B_{\Lambda,\beta}(n)=\frac{1}{|\Lambda|}\sum_{I\in\mathcal{I}}c_I\omega_{\Lambda}^I,
\label{Polymer}
\end{equation} 
where
\begin{equation}
c_I=\frac{1}{I!}\sum_{G\subset\mathcal{G}_I}(-1)^{|E(G)|}=\frac{1}{I!}\frac{\partial^{\sum_{V}I(V)}\log Z_{\mathcal{V}(N),\omega_{\Lambda}}}{\partial^{I(V_1)}\omega_{\Lambda}(V_1)\cdot\cdot\cdot\partial^{I(V_n)}\omega_{\Lambda}(V_n)}\bigg|_{\omega_{\Lambda}(V)=0}
\label{C_I}
\end{equation}
with
\begin{equation}
Z_{\mathcal{V}(N),\omega_{\Lambda}}:=\sum_{\{V_1,...,V_n\}_{\sim}}\prod_{i=1}^n\omega_{\Lambda}(V_i)=\int_{\Lambda^N}\prod_{i=1}^N\frac{dq_i}{|\Lambda|}\;e^{-\beta H^{\zero}_{\Lambda}(\mathbf{q})}.
\end{equation}
The second sum in \eqref{Polymer} is over the set $\mathcal{I}$ of multi-indices $I:\mathcal{V}(N)\rightarrow\{0,1,...\}$, $\omega_{\Lambda}^{I}=\prod_{V}\omega_{\Lambda}(V)^{I(V)}$, and, denoting $\mathrm{supp}I:=\{V\in\mathcal{V}(N)\;:\; I(V)>0\}$, $\mathcal{G}_{I}$ is the graph with $\sum_{V\in\mathrm{supp}I}I(V)$ vertices induced from $\mathcal{G}_{\mathrm{supp}I}\subset\mathbb{G}_{\mathcal{V}(N)}$ by replacing each vertex $V$ by the complete graph on $I(V)$ vertices. 

Furthermore, the sum in \eqref{C_I} is over all connected graphs $G$ of $\mathcal{G}_I$ spanning the whole set of vertices of $\mathcal{G}_I$ and $I!=\prod_{V\in\mathrm{supp}I}I(V)!$.

Denoting now with $[N]\equiv\{1,..,N\}$ and $A(I):=\bigcup_{V\in\mathrm{supp}I}V\subset[N]$, from \eqref{Polymer} we have that
\begin{equation}
B_{\beta,\Lambda}(n)=\frac{|\Lambda|^n}{n!}\sum_{I\;:\;A(I)=[n+1]}c_I\omega_{\Lambda}^I.
\label{B}
\end{equation}
We want to recall that $B_{\Lambda,\beta}(n)$ is the {\it finite volume} version of the {\it irreducible Mayer's coefficient} $\beta_n$ in the sense that 
\begin{equation}
\lim_{\Lambda\rightarrow\R^d}B_{\Lambda,\beta}(n)=\beta_n,
\end{equation}
where
\begin{equation}
\beta_n:=\frac{1}{n!}\sum_{\substack{g\in\mathcal{B}_{n+1}\\V(g)\ni1}}\int_{(\R^d)^n}\prod_{\{i,j\}\in E(g)}(e^{-\beta V(x_i-x_j)}-1)dx_2\cdot\cdot\cdot dx_n,\;\;x_1\equiv0.
\label{Mayers}
\end{equation} 
Here $\mathcal{B}_{n+1}$ is the set of the 2-connected graphs and, for all $g\in\mathcal{B}_{n+1}$, we denote with  $V(g)$ the set of its vertices. 

Using this formalization, thanks to \cite{pulvirenti2015finite} and also assuming in order to simplify the calculation that

\textbf{Assumption 3:} $V:\R^d\times\R^d\rightarrow\R\cup\{\infty\}$ has compact support $R<l$, i.e. 
\begin{equation}
V(x_i-x_j)=0\;\;\;\mathrm{if}\;\;\;|x_i-x_j|>R,
\end{equation}
for all $x_i,x_j\in\R^d$, we have:
\begin{lemma}
	Let $\rho^*_{\Lambda}$ as in \eqref{mu_0} such that condition $(\star)$ holds and $\rho_0$ as in	\eqref{lim}.
	It results:
	\begin{equation}
	\beta\left|f^{(m)}_{\beta}(\rho_0)-\F^{(m)}(\rho^*_{\Lambda})\right|\s\frac{|\partial\Lambda|}{|\Lambda|},
	\label{aPrioriDeriv}
	\end{equation}
	for all $m\ge0$.
	\label{LemmaA1}
\end{lemma}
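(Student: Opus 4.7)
The plan is to write
\[
f^{(m)}_\beta(\rho_0)-\F^{(m)}(\rho^*_\Lambda)=\bigl[f^{(m)}_\beta(\rho_0)-\F^{(m)}(\rho_0)\bigr]+\bigl[\F^{(m)}(\rho_0)-\F^{(m)}(\rho^*_\Lambda)\bigr]
\]
and bound each bracket separately by $|\partial\Lambda|/|\Lambda|$ in absolute value.

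For the first bracket, I would use the Mayer series
$\beta f_\beta(\rho)=\rho(\log\rho-1)-\sum_{n\ge 1}\frac{\rho^{n+1}}{n+1}\beta_n$
together with the finite volume expansion \eqref{FreeERL1}. The entropy parts are identical, so after differentiating $m$ times the difference becomes
\[
\sum_{n\ge1}\frac{1}{n+1}\bigl[(\rho^{n+1})^{(m)}-\p_{n+1}^{(m)}(\rho)\bigr]B_{\Lambda,\beta}(n)+\sum_{n\ge1}\frac{(\rho^{n+1})^{(m)}}{n+1}\bigl[B_{\Lambda,\beta}(n)-\beta_n\bigr].
\]
The first sum is controlled by the elementary polynomial identity $\p_{n+1}(\rho)=\prod_{k=0}^n(\rho-k/|\Lambda|)$, which gives $|(\rho^{n+1})^{(m)}-\p_{n+1}^{(m)}(\rho)|\le C(n,m)/|\Lambda|$ with $C(n,m)$ polynomial in $n$; summability in $n$ follows from the decay \eqref{absCan}. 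The second sum is controlled by the estimate $|B_{\Lambda,\beta}(n)-\beta_n|\lesssim e^{-c'n}|\partial\Lambda|/|\Lambda|$ taken from \cite{pulvirenti2015finite}, whose derivation uses the compact support Assumption 3 so that graphs entering $B_{\Lambda,\beta}(n)$ but not $\beta_n$ are pinned within a layer of width $R$ near $\partial\Lambda$. Since $1/|\Lambda|\le|\partial\Lambda|/|\Lambda|$ in any dimension $d\ge 1$, the first bracket is $O(|\partial\Lambda|/|\Lambda|)$ uniformly in $\rho$ in the cluster-expansion regime.

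For the second bracket, I first prove the $m=0$ statement $|\rho_0-\rho^*_\Lambda|\lesssim|\partial\Lambda|/|\Lambda|$. Combining $\mu_0=f'_\beta(\rho_0)$ from \eqref{MU} with $\mu_0=\F'(\rho^*_\Lambda)+S'_{|\Lambda|}(\rho^*_\Lambda)$ from \eqref{mu_0}, and using $|S'_{|\Lambda|}(\rho^*_\Lambda)|\lesssim 1/|\Lambda|$, I obtain
\[
|\F'(\rho^*_\Lambda)-\F'(\rho_0)|\le|f'_\beta(\rho_0)-\F'(\rho_0)|+|S'_{|\Lambda|}(\rho^*_\Lambda)|\lesssim\frac{|\partial\Lambda|}{|\Lambda|},
\]
where the first term is precisely the $m=1$ instance of the first bracket already settled above (so there is no circularity). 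Since we are in the cluster-expansion regime there is a uniform lower bound on $\F''$ (which, in the limit, corresponds to $f''_\beta(\rho_0)>0$, i.e.\ to being away from a phase transition), and an implicit function argument then yields $|\rho_0-\rho^*_\Lambda|\lesssim|\partial\Lambda|/|\Lambda|$. The mean-value theorem finally gives
\[
|\F^{(m)}(\rho_0)-\F^{(m)}(\rho^*_\Lambda)|\le\|\F^{(m+1)}\|_\infty\,|\rho_0-\rho^*_\Lambda|,
\]
with $\|\F^{(m+1)}\|_\infty$ uniformly bounded in $\Lambda$ on compact subsets of the cluster-expansion regime thanks to \eqref{absCan}.

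The main technical obstacle is the surface-vs-volume bound $|B_{\Lambda,\beta}(n)-\beta_n|\lesssim e^{-c'n}|\partial\Lambda|/|\Lambda|$, which is the core statement borrowed from \cite{pulvirenti2015finite}. Its proof rests on a careful analysis of the abstract polymer representation recalled in this appendix: one shows that the clusters contributing to $B_{\Lambda,\beta}(n)-\beta_n$ are precisely those whose integration over $\R^d$ is either truncated at $\partial\Lambda$ or forces one of the particles to lie within distance $R$ of $\partial\Lambda$, and Assumption 3 ($R<l$) rules out wrap-around effects and yields the clean $|\partial\Lambda|/|\Lambda|$ scaling with exponential decay in $n$ inherited from the tree-graph bound. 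Once this estimate is in hand, the steps above amount to bookkeeping with absolutely convergent series.
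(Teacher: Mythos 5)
Your proof is correct and rests on exactly the same two pillars as the paper's: the surface-order estimate $|B_{\Lambda,\beta}(n)-\beta_n|\lesssim e^{-c'n}|\partial\Lambda|/|\Lambda|$ imported from \cite{pulvirenti2015finite}, and the density comparison $|\rho_0-\rho^*_\Lambda|\lesssim|\partial\Lambda|/|\Lambda|$. The bookkeeping, however, is organized differently. You split the difference at $\rho_0$, writing $f_\beta^{(m)}(\rho_0)-\F^{(m)}(\rho_0)$ plus $\F^{(m)}(\rho_0)-\F^{(m)}(\rho^*_\Lambda)$, whereas the paper splits at $\rho^*_\Lambda$: it first compares $f_\beta^{(m)}(\rho_0)$ to $f_\beta^{(m)}(\rho^*_\Lambda)$ by explicitly binomially expanding $\rho_0^{n+1-m}$ and $\rho_0^{1-m}$ about $\rho^*_\Lambda$ term by term (eqs.\ (A.15)--(A.17)), and then invokes the modified polymer weights $\omega^{(m)}_\Lambda(V)$ with the boundary-layer indicator $F_q(\epsilon)$ and the machinery of \cite{pulvirenti2015finite} to bound $|f_\beta^{(m)}(\rho^*_\Lambda)-\F^{(m)}(\rho^*_\Lambda)|$. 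Your replacement of the explicit binomial manipulations by a mean-value-theorem argument with a uniform bound on $\|\F^{(m+1)}\|_\infty$ (supplied by \eqref{absCan}) is cleaner and shorter. You also supply a self-contained derivation of the density comparison via $\mu_0=f'_\beta(\rho_0)=\F'(\rho^*_\Lambda)+S'_{|\Lambda|}(\rho^*_\Lambda)$, the $m=1$ instance of the first bracket, and strict convexity, where the paper simply asserts this ``by construction'' with a reference to \cite{de1986asymptotic}; the non-circularity you flag is genuine and worth making explicit. Both approaches lean equally heavily on the surface-vs-volume estimate from \cite{pulvirenti2015finite}, so neither route is materially easier, but yours reorganizes the proof so that the cluster-expansion comparison and the density-shift comparison are cleanly separated rather than interleaved.
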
 
\begin{proof}
	Let us consider for first the case $m\ge1$. We define 
	\begin{equation}
	\omega^{(m)}_{\Lambda}(V):=2\rho_{\Lambda}^{1-m}{{|V|+1}\choose m}\sum_{g\in\mathcal{C}_{V}}\int_{|\Lambda|^{|V|}}\prod_{i=1}^{|V|}\frac{dq_i}{|\Lambda|}\prod_{\{i,j\}\in E(g)} f_{i,j} \prod_{i=1}^{|V|}F_{q_i}(\epsilon)
	\label{Omega-m}
	\end{equation}
	where
	\begin{equation}
	F_{q}(\epsilon):=(1-\epsilon)\mathbf{1}_{\{d(q,\Lambda^c)< R|V|\}}+\epsilon\;\mathbf{1}_{\{d(q,\Lambda^c)\ge R|V|\}},
	\end{equation}

	Then, calling $n=|V|$ and using the same estimates of (4.18)-(4.20) of \cite{pulvirenti2015finite} we obtain
	\begin{equation}
	|\omega_{\Lambda}^{(m)}(V)|\le \rho_{\Lambda}^{1-m}\frac{2}{m!}\frac{e^{2\beta B}(n+1)^m}{n}\left[e^{2\beta B}C(\beta,R)\rho_{\Lambda}\right]^{n-1}
	\label{NuoviClusters1}
	\end{equation}
	far from the boundary ($\epsilon=1$), and
	\begin{equation}
	|\omega_{\Lambda}^{(m)}(V)|\le \rho_{\Lambda}^{1-m}\frac{2}{m!}\frac{e^{2\beta B}(n+1)^m}{n}\left[e^{2\beta B}C(\beta,R)\rho_{\Lambda}\right]^{n-1}\frac{dR}{l}
	\label{NuoviClusters2}
	\end{equation}
	near the boundary ($\epsilon=0$).
	Noting that $|\omega^{(m)}_{\Lambda}(V)|$ is an upper bound for the $n^{th}$ term of $\f_{\Lambda,\beta,\e}^{int,(m)}$ (see \eqref{CovClusrDeriv} and \eqref{Convergence}) and revisiting sections 4, 5 and 6 of \cite{pulvirenti2015finite} we have
	\begin{equation}
	\beta\bigg|f_{\beta}^{(m)}(\rho^*_{\Lambda})-\F^{(m)}(\rho^*_{\Lambda})\bigg|\s\frac{|\partial\Lambda|}{|\Lambda|}.
	\end{equation}   
	The conclusion follows (also when $m=0$) from the fact that by construction $|\rho^*_{\Lambda}-\rho_{0}|\s |\partial\Lambda|/|\Lambda|$ (see also \cite{de1986asymptotic}) and from the exponential decay of $F_{\Lambda,\beta,N}(n)$ given in \eqref{absCan}. Indeed from \eqref{Infinit}, \eqref{CanCE1} and \eqref{Cluster-Coefficient}-\eqref{Mayers} we have
	\begin{equation}
	\beta f_{\beta}^{(m)}(\rho_0)=\frac{d^m}{d\rho^m}\rho(\log\rho-1)\bigg|_{\rho=\rho_0}+\sum_{n\ge1}{n+1\choose m}\rho_0^{n+1-m}\frac{\beta_n}{n+1}.
	\label{fbeta}
	\end{equation}
	The first term gives
	
	\begin{equation}
	\frac{d^m}{d\rho^m}\rho(\log\rho-1)\bigg|_{\rho=\rho_0}=(-1)^m\frac{1}{\rho_0^{m-1}}=(-1)^m\left(\frac{1}{(\rho^*_{\Lambda})^{m-1}}+\frac{\sum_{k=1}^{m-1}{m-1\choose k}(\rho^*_{\Lambda})^{m-k}(\rho_0-\rho^*_{\Lambda})^k}{(\rho_0\rho^*_{\Lambda})^{m-1}}\right)
	\end{equation}	
	for all $m\ge2$. In the above formula, the first term cancels with the corresponding in $f_{\beta}^{(m)}(\rho^*_{\Lambda})$
	while the second is bounded by $|\partial\Lambda|/|\Lambda|$. 
	For the second term in \eqref{fbeta} we have:
	\begin{equation}\begin{split}\label{A.16}
	&\sum_{n\ge1}{n+1\choose m}\rho_0^{n+1-m}\frac{\beta_n}{n+1}=\sum_{n\ge1}{n+1\choose m}(\rho_0\pm\rho^*_{\Lambda})^{n+1-m}\frac{\beta_n}{n+1}
	\\
	&\;=\sum_{n\ge1}{n+1\choose m}\left[\sum_{k=0}^{n+1-m}{n+1-m\choose k}(\rho_0-\rho^*_{\Lambda})^{n+1-m-k}(\rho^*_{\Lambda})^k\right]\frac{\beta_n}{n+1}
	\\
	&\;=\sum_{n\ge1}{n+1\choose m}(\rho^*_{\Lambda})^{n+1-m}\frac{\beta_n}{n+1}+\sum_{n\ge1}{n+1\choose m}\sum_{k=0}^{n
		-m}\bigg[{n+1-m\choose k}\times
	\\
	&\;\;\;\;\;\;\;\;\;\;\;\;\;\;\;\;\;\;\;\;\;\;\;\;\;\;\;\;\;\;\;\;\;\;\;\;\;\;\;\;\;\;\;\;\;\;\;\;\;\;\;\;\;\;\;\;\times(\rho_0-\rho^*_{\Lambda})^{n+1-m-k}(\rho^*_{\Lambda})^k\bigg]\frac{\beta_n}{n+1},
	\end{split}
	\end{equation} 
	where
	\begin{equation}\begin{split}\label{A.17}
	&\sum_{k=0}^{n-m}\left[{n+1-m\choose k}(\rho_0-\rho^*_{\Lambda})^{n+1-m-k}(\rho^*_{\Lambda})^k\right]\frac{\beta_n}{n+1}
	\\
	&	\le\frac{\rho^*_{\Lambda}}{2^m}\frac{|\partial\Lambda|}{|\Lambda|}\sum_{n\ge1}(n+1)^m\left(\frac{2}{\rho^*_{\Lambda}}\right)^n\left|\frac{\rho^*_{\Lambda}}{n+1}\beta_n\right|\le\frac{\rho^*_{\Lambda}}{2^m}\frac{|\partial\Lambda|}{|\Lambda|}\sum_{n\ge1}\left(\frac{2}{\rho^*_{\Lambda} e^c}\right)^n\s\frac{|\partial\Lambda|}{|\Lambda|}.
	\end{split}
	\end{equation}
	which concludes the proof.
\end{proof}

\begin{remark}
	From \eqref{lim}, i.e. the fact that $|\rho_0-\bRL|\s|\partial\Lambda|/|\Lambda|$ with $\bRL$ given by \eqref{MeanValue}, the previous Lemma is also valid if we consider $\bRL$ instead of $\rho^*_{\Lambda}$.
\end{remark}

\section{Stirling's approximation}
\label{appendice2}
We recall Stirling's formula: for $N\in\mathbb{N}$ large enough
\begin{equation}
\sqrt{2\pi N}\left(\frac{N}{e}\right)^N\le N!\le e^{1/12N}\sqrt{2\pi N}\left(\frac{N}{e}\right)^N.
\label{primo}
\end{equation}

Using \eqref{FreeE}, \eqref{CanCE1} and \eqref{FreeERL1}, for $\rho_{\Lambda}=N/|\Lambda|\in(0,1)$ we have:
\begin{eqnarray}
\beta[f_{\Lambda,\beta,\zero}(N)-\F(\rho_{\Lambda})] & = & -\frac{1}{|\Lambda|}\log\frac{|\Lambda|^{\rho_{\Lambda}|\Lambda|}}{(\rho_{\Lambda}|\Lambda|)!}-\rho_{\Lambda}(\log\rho_{\Lambda}-1)\nonumber
\\
&=&
\frac{1}{|\Lambda|}\log\left[(\rho_{\Lambda}|\Lambda|)!\left(\frac{e}{\rho_{\Lambda}|\Lambda|}\right)^{\rho_{\Lambda}|\Lambda|}\right]\nonumber\\
& =: & \rho_{\Lambda}B_{|\Lambda|}(\rho_{\Lambda}|\Lambda|) =:
S_{|\Lambda|}(\rho_{\Lambda}).
\label{StirlingEq}
\end{eqnarray} 

Thus, from \eqref{primo}, we get
\begin{equation}
\frac{\log\sqrt{2\pi\rho_{\Lambda}|\Lambda|}}{|\Lambda|}\le S_{|\Lambda|}(\rho_{\Lambda})\le\frac{\log\sqrt{2\pi\rho_{\Lambda}|\Lambda|}}{|\Lambda|}+\frac{1}{12\rho_{\Lambda}|\Lambda|^2}.
\label{StirlingBound}
\end{equation}

We can generalize the first quantity defined in \eqref{StirlingEq} as follows
\begin{equation}
B(x)=\frac{1}{x}\log\left[\Gamma(x+1)\left(\frac{x}{e}\right)^{-x}\right]
\end{equation}
for all $x\in\R^+$ and where $\Gamma(\cdot)$ is the gamma function $\Gamma(x):=\int_{0}^{\infty}t^{x-1}e^{-t}dt$ with the following properties \cite{jameson2015simple}:
\begin{equation}
\Gamma(N+1)=N!
\end{equation}
for all $N\in\mathbb{N}$,
\begin{equation}
\sqrt{\frac{2\pi}{x}}\left(\frac{x}{e}\right)^{x}\le \Gamma(x)\le \sqrt{\frac{2\pi}{x}}\left(\frac{x}{e}\right)^{x}e^{\frac{1}{12x}}
\end{equation}
for all $x\in\R^+$.  Moreover, 
\begin{equation}
\psi(x):=\frac{d}{dx}\log(\Gamma(x))=\frac{\Gamma'(x)}{\Gamma(x)}
=\log x-\frac{1}{12x}-p(x),
\end{equation}
for all $x\in\R^+$ and where $0\le p(x)\le1/12x^2$.
Then we have:
\begin{eqnarray}
\frac{d}{dx}B(x) & = & \frac{\psi(x+1)-\log x}{x}-\frac{B(x)}{x}\nonumber\\
&=&\frac{1}{x}\left[\log\left(1+\frac{1}{x}\right)+\frac{1}{12(x+1)}+p(x+1)\right] -\frac{B(x)}{x},
\end{eqnarray}
so that, denoting with $B'(\rho_{\Lambda}|\Lambda|)=d/dx B(x)|_{x=\rho_{\Lambda}|\Lambda|}$ 
we have 
\begin{eqnarray}
S'_{|\Lambda|}(\rho_{\Lambda}) & = & B(\rho_{\Lambda}|\Lambda|)+\rho_{\Lambda}|\Lambda|B'(\rho_{\Lambda}|\Lambda|)=\frac{13\rho_{\Lambda}|\Lambda|+12}{12\rho_{\Lambda}|\Lambda|(\rho_{\Lambda}|\Lambda|+1)}\nonumber
\\
&+&\sum_{n\ge2}\frac{(-1)^{n+1}}{n}\left(\frac{1}{\rho_{\Lambda}|\Lambda|}\right)^n+p(\rho_{\Lambda}|\Lambda|+1)\;\s\;\frac{1}{|\Lambda|}.
\label{Stirling1}
\end{eqnarray}

\section*{Acknowledgment} I would like to thank Dimitrios Tsagkarogiannis 
for his patient supervision during the preparation of this paper. I would also like
to thank Errico Presutti and Sabine Jansen for their helpful comments. 

\bibliographystyle{plain}
\bibliography{bibliografia1}
\nocite{*}

\end{document}